\DeclareSymbolFontAlphabet{\mathbb}{AMSb}
\DeclareSymbolFontAlphabet{\mathbbl}{bbold}
\newcommand{\D}{\Delta}
\newcommand{\hD}{\widetilde{\D}}
\newcommand{\hphi}{\widetilde{\varphi}}
\newcommand{\R}{\mathbb{R}}
\newcommand{\K}{\mathbb{K}}
\newcommand{\E}{\mathbb{E}}
\newcommand{\bL}{\mathbbl{\Lambda}}
\newcommand{\C}{\mathbb{C}}
\newcommand{\Z}{\mathbb{Z}}
\newcommand{\Q}{\mathbb{Q}}
\newcommand{\X}{\mathbb{X}}
\newcommand{\fE}{\mathbf{E}}
\newcommand{\fV}{\mathbf{V}}
\newcommand{\fF}{\mathbf{F}}
\newcommand{\bS}{\mathbb{S}}
\newcommand{\CV}{\mathcal{V}}
\newcommand{\CP}{\mathcal{P}}
\newcommand{\CO}{\mathcal{O}}
\newcommand{\bx}{\mathbf{x}}
\newcommand{\bn}{\mathbf{n}}
\newcommand{\bm}{\mathbf{m}}
\newcommand{\bell}{\boldsymbol{\ell}}
\newcommand{\Ln}{\mathop{\mathrm{Ln}}\nolimits}
\newcommand{\Isom}{\mathop{\mathrm{Isom}}\nolimits}
\newcommand{\vol}{\mathop{\mathrm{vol}}\nolimits}
\newcommand{\dist}{\mathop{\mathrm{dist}}\nolimits}
\newcommand{\reg}{\mathrm{reg}}
\newcommand{\sing}{\mathrm{sing}}
\renewcommand{\Im}{\mathop{\mathrm{Im}}\nolimits}
\newtheorem*{sbc}{Strong Bellows Conjecture}
\newtheorem{theorem}{Theorem}[section]
\newtheorem{propos}[theorem]{Proposition}
\newtheorem{cor}[theorem]{Corollary}
\newtheorem{lem}[theorem]{Lemma}
\newtheorem*{bell-conj}{Bellows conjecture}
\theoremstyle{definition}
\newtheorem{remark}[theorem]{Remark}
\numberwithin{equation}{section}
\author{Alexander A. Gaifullin, Leonid Ignashchenko}
\thanks{The work of the first listed author is supported by the Russian Foundation for Basic Research (grant 16-51-55017) and by a grant of the President of the Russian Federation (grant MD-2907.2017.1).}
\title{Dehn invariant of flexible polyhedra}
\date{}
\subjclass[2010]{52C25, 52B45 (Primary), 51M25, 32D99 (Secondary)}
\address{Alexander A. Gaifullin:}
\address{Steklov Mathematical Institute of Russian Academy of Sciences, Gubkina str. 8, Moscow, 119991, Russia}
\address{Moscow State University, Leninskie gory 1, Moscow, 119991, Russia}
\address{Skolkovo Institute of Science and Technology, Skolkovo Innovation Center, build.~3, Moscow,  143026, Russia}
\address{Institute for Information Transmission Problems (Kharkevich Institute), Bolshoy Karetny per. 19, build.~1, Moscow, 127051, Russia}
\email{agaif@mi.ras.ru}
\address{Leonid Ignashchenko:}
\address{Moscow State University, Leninskie gory 1, Moscow, 119991, Russia}
\email{lenya.ignashenko@yandex.ru}
\begin{document}

\keywords{Flexible polyhedron, Dehn invariant, scissors congruence, Strong Bellows Conjecture, analytic continuation}

\begin{abstract}
We prove that the Dehn invariant of any flexible polyhedron in $n$-dimen\-sio\-nal Euclidean space, where $n\ge 3$, is constant during the flexion. For $n=3$ and~$4$ this implies that any flexible polyhedron remains scissors congruent to itself during the flexion. This proves the Strong Bellows Conjecture posed by Connelly in 1979. It was believed that this conjecture was disproved by Alexandrov and Connelly in 2009. However, we find an error in their counterexample. Further, we show that the Dehn invariant of a flexible polyhedron in either sphere~$\bS^n$ or Lobachevsky space~$\bL^n$, where $n\ge 3$, is constant during the flexion if and only if this polyhedron satisfies the usual Bellows  Conjecture, i.\,e., its volume is constant during every flexion of it. Using previous results due to the first listed author, we deduce that the Dehn invariant is constant during the flexion for every bounded flexible polyhedron in odd-dimensional Lobachevsky space and for every  flexible polyhedron with sufficiently small edge lengths in any space of constant curvature of dimension greater than or equal to~$3$.
\end{abstract}

\maketitle

\section{Introduction}

Let $\X^n$ be one of the three $n$-dimensional spaces of constant curvature, that is, Euclidean space~$\E^n$ or sphere~$\bS^n$ or Lobachevsky space~$\bL^n$.

A \textit{flexible polyhedron\/} is an $(n-1)$-dimensional closed polyhedral surface~$P$ in~$\X^n$ that admits a \textit{flexion}, i.\,e., a non-trivial continuous deformation~$P_t$ not induced by an isometry of the whole space~$\X^n$ such that every face of~$P_t$ remains congruent to itself during the deformation. Notice that the surface~$P$ is not required to be embedded, though embedded flexible polyhedra, which are called \textit{flexors}, are of a special interest. In the spherical case $\X^n=\bS^n$ it is usually reasonable to consider only flexible polyhedra contained in the open hemisphere~$\bS^n_+$. 

First examples of flexible polyhedra are Bricard's flexible self-intersecting octahedra, see~\cite{Bri97}. By now, examples of flexible polyhedra are known in all spaces of constant curvature of all dimensions, see~\cite{Gai13}. The first example of an embedded flexible polyhedron in~$\E^3$ was constructed by Connelly~\cite{Con77}. His construction can be easily extended to~$\bS^3$ and~$\bL^3$, cf.~\cite{Kui78}. The first listed author constructed examples of embedded flexible polyhedra in hemispheres~$\bS^n_+$ of all dimensions, see~\cite{Gai15a}. However, it is still unknown if embedded flexible polyhedra exist in Euclidean spaces and Lobachevsky spaces of dimensions greater than~$3$.

The \textit{Bellows Conjecture} (see~\cite{Con78}, \cite{Kui78}) claims that the volume of any flexible polyhedron is constant during the flexion. Here and further we always assume that the dimension $n$ is greater than or equal to~$3$. Under the volume of a polyhedron~$P$ we always mean the $n$-dimensional volume enclosed by the $(n-1)$-dimensional polyhedral surface~$P$. If $P$ is not embedded, then a natural concept of a \textit{generalized oriented volume} should be applied, see Section~\ref{subsection_vol} for details.
By now, the Bellows Conjecture is proved in the following cases:
\begin{itemize}
\item flexible polyhedra in Euclidean spaces~$\E^n$ (Sabitov for $n=3$, see~\cite{Sab96}, \cite{Sab98a}, \cite{Sab98b}, and see~\cite{CSW97} for another proof, and the first listed author for $n\ge 4$, see~\cite{Gai11}, \cite{Gai12}),
\item bounded flexible polyhedra in odd-dimensional Lobachevsky spaces~$\bL^{2n+1}$, see~\cite{Gai15b},
\item flexible polyhedra with sufficiently small edge lengths in any of the spaces~$\bL^n$ and~$\bS^n$, see~\cite{Gai17}.
\end{itemize}
Counterexamples to the Bellows Conjecture in open hemispheres~$\bS^n_+$ were constructed by Alexandrov~\cite{Ale97} for $n=3$ and by the first listed author~\cite{Gai15a} for  $n\ge 4$. It is still unknown whether the Bellows Conjecture is true in even-dimensional Lobachevsky spaces.

Recall that two $n$-dimensional polytopes~$P$ and~$Q$ in~$\X^n$ are said to be \textit{scissors congruent} if either of them can be divided into convex polytopes with pairwise disjoint interiors, 
$$P=P_1\cup\cdots\cup P_k,\qquad Q=Q_1\cup\cdots\cup Q_k,$$ such that $P_j$ is congruent to~$Q_j$ for every~$j$. The scissors congruence relation can be easily extended to non-embedded polyhedra, see Section~\ref{subsection_scissors}. Dehn~\cite{Deh02} showed that polytopes of equal volume are not necessarily scissors congruent, thus answering Hilbert's Third Problem. Namely,  he proved that to be scissors congruent two polytopes must necessarily have equal \textit{Dehn invariants}, where the Dehn invariant of a polytope~$P$ is, by definition, the following element of the group $\R\otimes_{\Z}(\R/\pi\Z)=\R\otimes_{\Q}(\R/\pi\Q)$:
\begin{equation}\label{eq_Dehn_initial}
\D(P)=\sum_{\dim F=n-2}\vol_{n-2}(F)\otimes\varphi_F\,.
\end{equation}
Here the sum is taken over all $(n-2)$-dimensional faces~$F$ of~$P$, and~$\varphi_F$ is the dihedral angle of~$P$ at~$F$.

In~\cite{Con79} Connelly proposed the following conjecture, which is naturally called the \textit{Strong Bellows Conjecture}:
\begin{sbc}
Any flexible polyhedron remains scissors congruent to itself during the flexion.
\end{sbc}
A weak form of the Strong Bellows Conjecture says that \textit{the Dehn invariant of any flexible polyhedron remains constant during the flexion.}
Sydler~\cite{Syd65}  proved that two polytopes in~$\E^3$ are scissors congruent  if and only if they have equal volumes and Dehn invariants. Jessen~\cite{Jes68} proved that the same holds for polytopes in~$\E^4$. Hence in~$\E^3$ and in~$\E^4$ the weaker form of the Strong Bellows Conjecture is equivalent to the original form of it. The question of whether any two polyhedra of equal volumes and Dehn invariants are scissors congruent to each other is still open in Euclidean spaces~$\E^n$, where $n\ge 5$, and in non-Euclidean spaces~$\bS^n$ and~$\bL^n$, where $n\ge 3$, see~\cite{Dup01}.

Alexandrov~\cite{Ale10} checked that the Dehn invariants of Bricard's flexible octahedra are constant during the flexion.

Alexandrov and Connelly~\cite{AlCo11} stated that they constructed a counterexample to the Strong Bellows Conjecture, that is, a flexible polyhedron in~$\E^3$ with non-constant Dehn invariant. Nevertheless, their proof of the fact that the Dehn invariant of the constructed polyhedron is non-constant contained a fatal error. We discuss this example and point out the error in Section~\ref{section_example}.

The main result of the present paper is the proof of the weak form of the Strong Bellows Conjecture.

\begin{theorem}\label{thm_main}
The Dehn invariant of any flexible polyhedron in~$\E^n$, where $n\ge 3$, remains constant during the flexion.
\end{theorem}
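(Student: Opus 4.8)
The plan is to view the Dehn invariant along a flexion as a function of the configuration of the vertices and to exploit the fact that this function is, in a suitable sense, algebraic, so that constancy can be checked after passing to the complexification, where the flexion becomes a path in a complex algebraic variety. Concretely, fix the combinatorial type of the polyhedron~$P$ and coordinatize a flexion~$P_t$ by the positions of its vertices in~$\E^n$. The squared edge lengths are fixed real constants, so the flexions of~$P$ (modulo isometries of~$\E^n$) form a real algebraic subset~$\CV$ of some~$\R^N$; the flexion~$P_t$ traces a real-analytic arc in~$\CV$. The key point is that although the Dehn invariant takes values in the infinite-dimensional $\Q$-vector space $\R\otimes_{\Q}(\R/\pi\Q)$, along one flexion only finitely many ``coordinates'' can vary, and one reduces to showing that certain honest real-analytic functions built from the volumes $\vol_{n-2}(F)$ and from $e^{i\varphi_F}$ are constant.

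The central step is to control the dihedral angles. For each $(n-2)$-face~$F$ the two $(n-1)$-faces meeting along~$F$ have outer normal vectors whose scalar product is $-\cos\varphi_F$ and whose cross-product-type combination has length $\sin\varphi_F$; hence $z_F:=e^{i\varphi_F}=\cos\varphi_F+i\sin\varphi_F$ is an algebraic function of the vertex coordinates on~$\CV$. Likewise each $\vol_{n-2}(F)$ is, up to sign, the square root of a polynomial (a Cayley--Menger-type determinant) in the vertex coordinates, hence algebraic on~$\CV$. Passing to the complexification~$\CV_{\C}$, I would analytically continue the flexion to a complex-analytic arc, continue the multivalued functions $z_F$ and $\vol_{n-2}(F)$ along it, and organize the Dehn invariant as a $\Q$-linear combination $\sum_F \vol_{n-2}(F)\otimes\varphi_F$; the strategy is to show that the ``derivative'' of this expression along the flexion vanishes identically. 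Here one uses that $\log z_F=i\varphi_F$ and that, by the Schläfli-type differential identity for the variation of dihedral angles in Euclidean space, $\sum_F \vol_{n-2}(F)\,d\varphi_F=0$ when the edge lengths are fixed; tensoring appropriately, this kills the variation of~$\D(P_t)$ on the real arc. The role of analytic continuation is to replace ``vanishes on the real flexion arc'' by ``vanishes identically on an irreducible component'', so that no special position argument is needed and branch ambiguities of the square roots are harmless.

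To turn this into a clean argument I would proceed in the following order. \textbf{(1)} Set up the configuration variety~$\CV$ of flexions with prescribed edge lengths, identify a flexion~$P_t$ with a real-analytic arc in it, and reduce Theorem~\ref{thm_main} to the statement that a finite collection of real-analytic functions (built from the $\vol_{n-2}(F)$ and the $\varphi_F$, read modulo~$\pi\Q$ in the second tensor slot) is constant along this arc. \textbf{(2)} Exhibit $e^{i\varphi_F}$ and $\vol_{n-2}(F)^2$ as algebraic functions on~$\CV$, and complexify: choose an irreducible component of~$\CV_{\C}$ containing the arc, and fix branches of the relevant algebraic functions along an analytic continuation of the arc. \textbf{(3)} Establish the Euclidean Schläfli variation formula $\sum_{\dim F=n-2}\vol_{n-2}(F)\,d\varphi_F=0$ along the flexion (constant edge lengths), and use it, together with $d(\vol_{n-2}(F))=0$ being \emph{not} available in general, to show instead that the full expression $d\bigl(\sum_F \vol_{n-2}(F)\otimes\varphi_F\bigr)$ vanishes in $\R\otimes_{\Q}(\R/\pi\Q)$: the two summands of the Leibniz rule are $\sum_F d(\vol_{n-2}F)\otimes\varphi_F$ and $\sum_F \vol_{n-2}(F)\otimes d\varphi_F$, and I would need a second ingredient controlling the first summand. \textbf{(4)} Conclude by a connectedness/identity-theorem argument on the irreducible component.

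The main obstacle is precisely step~(3): the naive Schläfli identity controls $\sum \vol_{n-2}(F)\,d\varphi_F$ but not the mixed term $\sum d(\vol_{n-2}F)\otimes\varphi_F$ coming from the Leibniz rule in the tensor product, and the dihedral angles $\varphi_F$ sit in the ``bad'' slot $\R/\pi\Q$ where the usual calculus does not apply. I expect the resolution to require working over the complexification more seriously — treating $\varphi_F$ as a complex logarithm $\frac{1}{i}\log z_F$, so that $d\varphi_F = \frac{1}{i}\,d z_F/z_F$ becomes genuinely algebraic (a logarithmic derivative of an algebraic function), and then invoking an algebraic independence / functional transcendence input (in the spirit of the Ax--Schanuel theorem or of the rigidity of the Dehn invariant under analytic continuation) to show that the only way the algebraic relations among the $z_F$ forced by constant edge lengths can be compatible with a varying value of $\sum \log(\text{edge-length data})\otimes(\text{angles})$ is for that value to be locally constant. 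Making this transcendence step precise, and checking that it indeed forces constancy of $\D(P_t)$ rather than merely of its image under some weaker invariant, is the crux of the proof.
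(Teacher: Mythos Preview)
Your overall architecture---configuration variety, algebraicity of $e^{i\varphi_\sigma}$, complexification, Schl\"afli, analytic continuation---matches the paper's, but there is one confusion and one genuine gap. The confusion is in step~(3): once you reduce to simplicial polyhedra (which is standard and harmless), each $(n-2)$-face~$\sigma$ is a simplex whose edges are among the edges of~$K$, so $V_\sigma:=\vol_{n-2}(P(\sigma))$ depends only on the fixed edge lengths and is \emph{constant} on the whole configuration space. There is no Leibniz term $\sum dV_\sigma\otimes\varphi_\sigma$ to worry about; your stated obstacle ``$d(\vol_{n-2}F)=0$ not available'' is simply not an obstacle.

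The genuine gap is the passage from the scalar Schl\"afli relation $\sum_\sigma V_\sigma\,d\varphi_\sigma=0$ to constancy of $\sum_\sigma V_\sigma\otimes\varphi_\sigma$ in $\R\otimes(\R/\pi\Z)$; your gesture toward Ax--Schanuel is not made precise and is not what is needed. The paper's mechanism is as follows. Testing against an arbitrary $\Q$-linear functional $f\colon\R\to\Q$, it suffices to show that each real-valued function $\psi_f=\sum_\sigma f(V_\sigma)\,\hphi_\sigma$ is constant. Work on a stratum~$S$ of the real configuration space and let $\Xi$ be its complex Zariski closure (an irreducible variety with $\dim_\C\Xi=\dim_\R S$). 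On the Zariski open set where all $Q_\sigma\ne 0$, each $\hphi_\sigma$ is a branch of $-i\Ln Q_\sigma$; along a loop~$\gamma$ it picks up $2\pi k_\sigma(\gamma)$ with $k_\sigma(\gamma)\in\Z$. Schl\"afli makes $\sum_\sigma V_\sigma\hphi_\sigma$ constant on the real locus, and uniqueness of analytic continuation (this is where $\dim_\C\Xi=\dim_\R S$ is used) forces its continuation to remain constant, so its monodromy vanishes: $\sum_\sigma V_\sigma k_\sigma(\gamma)=0$. This is an \emph{integer} linear relation among the~$V_\sigma$, hence preserved by the $\Q$-linear map~$f$, giving $\sum_\sigma f(V_\sigma)k_\sigma(\gamma)=0$; thus $\psi_f$ extends to a \emph{single-valued} holomorphic function on the complex variety. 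Finally a Liouville-type growth bound ($|\Im\psi_f|\le\max_\sigma\ln|Q_\sigma^{\pm L}|$ with $Q_\sigma^{\pm 1}$ regular on a principal affine open) forces $\psi_f$ to be constant. The two ideas you are missing are the monodromy-to-integer-relation trick and the Liouville argument; no functional transcendence theory enters.
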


\begin{cor}
Suppose that $n$ is either $3$ or~$4$. Then any flexible polyhedron  in~$\E^n$ remains scissors congruent to itself during the flexion.
\end{cor}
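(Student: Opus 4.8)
The plan is to deduce the Corollary from Theorem~\ref{thm_main} together with the already established cases of the ordinary Bellows Conjecture and with the classical characterisations of scissors congruence in dimensions~$3$ and~$4$. Fix a flexion $P_t$ of a flexible polyhedron $P=P_0$ in~$\E^n$, where $n$ is $3$ or~$4$. By the Bellows Conjecture, which is a theorem in~$\E^3$ (Sabitov~\cite{Sab96,Sab98a,Sab98b}) and in~$\E^4$ (\cite{Gai11,Gai12}), the generalized oriented volume of~$P_t$ does not depend on~$t$; by Theorem~\ref{thm_main} the Dehn invariant $\D(P_t)$ does not depend on~$t$ either. Hence for every~$t$ the polyhedra $P_t$ and~$P_0$ have equal volumes and equal Dehn invariants.

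To pass from this to scissors congruence, and in particular to make sense of the statement for a possibly self-intersecting~$P$, I would work in the scissors congruence group $\CP(\E^n)$ of Section~\ref{subsection_scissors}: a polyhedral surface~$Q$ (embedded or not) determines a class $[Q]\in\CP(\E^n)$, and both the generalized oriented volume and the Dehn invariant extend to homomorphisms $\vol\colon\CP(\E^n)\to\R$ and $\D\colon\CP(\E^n)\to\R\otimes_{\Z}(\R/\pi\Z)$. In these terms Sydler's theorem~\cite{Syd65} for $n=3$ and Jessen's theorem~\cite{Jes68} for $n=4$ assert exactly that the pair $(\vol,\D)$ is injective on~$\CP(\E^n)$. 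Applying this to the class $[P_t]-[P_0]$, which by the previous paragraph has vanishing volume and vanishing Dehn invariant, one obtains $[P_t]=[P_0]$, i.e.\ $P_t$ is scissors congruent to~$P_0$, as required.

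The only real work is the bookkeeping behind the first two paragraphs: checking that the generalized volume and the Dehn invariant of Sections~\ref{subsection_vol} and~\ref{subsection_scissors} are compatible with the group~$\CP(\E^n)$ in the stated way, and that the Sydler and Jessen theorems do yield injectivity of $(\vol,\D)$ on the full group~$\CP(\E^n)$ rather than merely on classes of honest polytopes. I expect no genuine geometric obstacle here once Theorem~\ref{thm_main} is available; the restriction to $n\in\{3,4\}$ is forced precisely because it is exactly the range in which $(\vol,\D)$ is known to be a complete invariant for Euclidean scissors congruence, whereas for $n\ge 5$ this is open.
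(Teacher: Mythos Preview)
Your argument is correct and is essentially the same as the paper's: the Corollary is deduced from Theorem~\ref{thm_main}, the already-proven Bellows Conjecture in~$\E^3$ and~$\E^4$, and the Sydler--Jessen theorems, with the bookkeeping for possibly non-embedded polyhedra handled via the scissors congruence group~$\CP(\E^n)$ exactly as in Section~\ref{subsection_scissors}.
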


For non-Euclidean spaces, we can prove that the Strong Bellows Conjecture is true whenever the Bellows Conjecture is true.

\begin{theorem}\label{thm_conditional}
Let $P$ be a polyhedron in either $\bS^n$ or~$\bL^n$, where $n\ge 3$. Assume that the Bellows Conjecture is true for~$P$, that is, the generalized oriented volume is constant during every flexion~$P_t$ of~$P$. Then the Dehn invariant is also constant during every flexion~$P_t$ of~$P$. 
\end{theorem}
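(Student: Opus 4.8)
The plan is to reproduce the proof of Theorem~\ref{thm_main} in the spherical and hyperbolic settings; the one genuinely new ingredient is the Schl\"afli differential formula, which converts the hypothesis into exactly the relation that is available for free when the ambient curvature vanishes. First I would fix the flexion $P_t$ and pass to the configuration space. In the standard quadric model of $\bS^n$ or $\bL^n$ inside $\R^{n+1}$ the vertices of $P_t$ lie on a fixed quadric, and the conditions that every face stay congruent to itself are polynomial, so (flexions may be chosen real-analytic) $P_t$ traces a real-analytic arc in a real algebraic set. Let $W$ be the complexification of the Zariski closure of this arc; it is an irreducible complex algebraic curve in which the arc is Zariski dense. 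On $W$ each $\cos\varphi_F$ is a rational function, while each $\ell_F:=\vol_{n-2}(F)$ is a \emph{constant}, because the faces do not change during the flexion, so the only $t$-dependence of $\D(P_t)=\sum_{\dim F=n-2}\ell_F\otimes\varphi_F$ is through the dihedral angles. Passing to the normalization of a suitable finite branched cover $\pi\colon\widetilde{W}\to W$ (adjoining the square roots $\sqrt{\cos^2\varphi_F-1}$), I may assume that each $z_F:=e^{i\varphi_F}$, a root of $\zeta^2-(2\cos\varphi_F)\zeta+1$, is a single-valued meromorphic function on $\widetilde{W}$; I then replace $\widetilde{W}$ by the component containing a fixed lift of the flexion arc.

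Next I would check that the Dehn invariant extends to a single-valued function $\D\colon\widetilde{W}\to\R\otimes_\Z(\R/\pi\Z)$. Locally one writes $\D=\sum_F\ell_F\otimes\varphi_F$ with $\varphi_F=\tfrac1i\log z_F$ holomorphic; changing the branches of the logarithms changes each $\varphi_F$ by an element of $2\pi\Z$, hence changes $\D$ by an element of $\R\otimes_\Z(2\pi\Z)=0$, and since the $z_F$ are already single-valued on $\widetilde{W}$ there is no monodromy of the form $\varphi_F\mapsto-\varphi_F$. Thus $\D$ is a well-defined single-valued function on $\widetilde{W}$, locally a finite sum of terms $\ell_F\otimes(\text{holomorphic function})$, whose restriction to the flexion arc equals $\D(P_t)$.

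Now I would bring in the hypothesis. The Schl\"afli differential formula gives, on $\widetilde{W}$, the identity of meromorphic $1$-forms
\[
(n-1)\,dV=\kappa\sum_{\dim F=n-2}\ell_F\,d\varphi_F,
\]
where $\kappa=1$ for $\bS^n$ and $\kappa=-1$ for $\bL^n$, and the right-hand side is a genuine single-valued meromorphic $1$-form representing the differential of the generalized oriented volume $V$. By the Bellows Conjecture for $P$, the function $V(P_t)$ is constant on the flexion arc, so this $1$-form vanishes along the arc; being a meromorphic $1$-form that vanishes on a Zariski-dense subset of the irreducible curve $\widetilde{W}$, it vanishes identically:
\[
\sum_{\dim F=n-2}\ell_F\,d\varphi_F\equiv 0\quad\text{on }\widetilde{W}.
\]
This is precisely the relation that holds automatically in the Euclidean case, where it is forced by the vanishing of the curvature.

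From this point I would follow the proof of Theorem~\ref{thm_main}: one analyses the single-valued function $\D$ near the finitely many points of a smooth completion of $\widetilde{W}$ at which the configuration degenerates (a cycle of facets becomes coplanar, a group of vertices collides, and so on), where the individual terms either survive to the limit or are absorbed into the Dehn invariant of a polyhedron of strictly simpler combinatorial type, and one combines the displayed vanishing of $\sum_F\ell_F\,d\varphi_F$ with an induction on the combinatorics of $P$ to conclude that $\D$ takes one and the same value at every point of $\widetilde{W}$; restricting to the flexion arc then gives $\D(P_t)=\D(P_0)$. The main obstacle is exactly this last step: it amounts to importing the whole analytic-continuation-and-degeneration machinery of Theorem~\ref{thm_main} and checking that nothing in it depends on the curvature of the ambient space --- in particular that the generalized oriented volume and its Schl\"afli formula are available and well behaved for non-embedded polyhedra, and that the degeneration analysis of $\widetilde{W}$ together with the combinatorial induction go through unchanged. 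Everything new relative to the Euclidean argument is concentrated in the short Schl\"afli step above, and that is the only place where the assumption that the Bellows Conjecture holds for $P$ is used.
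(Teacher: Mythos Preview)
Your high-level reduction is exactly right and is precisely how the paper proceeds: Theorems~\ref{thm_main} and~\ref{thm_conditional} are proved together as a single statement (Theorem~\ref{thm_precise}), and the only place the hypothesis on~$P$ enters is the Schl\"afli step, which converts ``$\CV$ is constant'' into $\sum_{\sigma}V_{\sigma}\,d\varphi_{\sigma}=0$---the identity that comes for free in~$\E^n$.

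Where your proposal diverges from the paper is in what you call ``the proof of Theorem~\ref{thm_main}''. There is no degeneration analysis at boundary points of a smooth completion and no combinatorial induction on~$P$. The actual mechanism is the following. From $\sum_{\sigma}V_{\sigma}\,d\varphi_{\sigma}=0$ one deduces that for every loop~$\gamma$ in the complexified configuration variety the winding numbers $k_{\sigma}(\gamma)$ of the regular functions $Q_{\sigma}=e^{i\varphi_{\sigma}}$ satisfy the \emph{integer} relation $\sum_{\sigma}V_{\sigma}k_{\sigma}(\gamma)=0$. Applying an arbitrary $\Q$-linear functional $f\colon\R\to\Q$ gives $\sum_{\sigma}f(V_{\sigma})k_{\sigma}(\gamma)=0$, so each function $\psi_f=\sum_{\sigma}f(V_{\sigma})\,\hphi_{\sigma}$ has trivial monodromy and extends to a single-valued holomorphic function on a smooth Zariski-open affine piece. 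A Liouville-type growth bound (Lemma~\ref{lem_log}), using that $\psi_f$ is a branch of $-i\sum_{\sigma}f(V_{\sigma})\Ln Q_{\sigma}$, forces $\psi_f$ to be constant. Since this holds for every~$f$, the element $\sum_{\sigma}V_{\sigma}\otimes\hphi_{\sigma}\in\R\otimes\R$ is constant, hence so is~$\D$.

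Two smaller issues in your write-up: the Zariski closure of a real-analytic flexion arc need not be a curve---it can have any dimension up to that of the ambient stratum, which is why the paper works with the stratification of the full configuration space~$\Sigma_0$ rather than with a single arc (this is also why the hypothesis must hold for \emph{all} flexions of~$P$; cf.\ the remark after Theorem~\ref{thm_conditional}); and your claim that~$\D$ is single-valued on~$\widetilde{W}$ is automatic (the target is already $\R\otimes(\R/\pi\Z)$) and does not by itself lead to constancy---one really needs the finer $\Q$-linear decomposition above.
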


\begin{remark}
In this theorem, it is important that the Bellows Conjecture is true for \textit{all} flexions of~$P$. If we know only that there exists a flexion~$P_t$ of~$P$ such that the volume of~$P_t$ is constant, then we cannot deduce that the Dehn invariant of~$P_t$ is constant.
\end{remark}

As we have already mentioned, the Bellows Conjecture is true for all bounded polyhedra in odd-dimensional Lobachevsky spaces, and for all polyhedra with sufficiently small edge lengths in any non-Euclidean space, see~\cite{Gai15b},~\cite{Gai17}. The following corollaries are immediate consequences of Theorem~\ref{thm_conditional} above, Theorem~1.1 in~\cite{Gai15b}, and Theorem~1.1 in~\cite{Gai17}.

\begin{cor}
The Dehn invariant of any bounded flexible polyhedron in~$\bL^n$, where $n$ is odd and $n\ge 3$, remains constant during the flexion.
\end{cor}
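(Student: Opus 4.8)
The plan is to obtain this corollary formally from Theorem~\ref{thm_conditional} together with the volume rigidity theorem of the first listed author for odd-dimensional Lobachevsky spaces. Let $P$ be a bounded flexible polyhedron in~$\bL^n$ with $n$ odd and $n\ge 3$, and let $P_t$ be an arbitrary flexion of~$P$. First I would recall Theorem~1.1 of~\cite{Gai15b}: the generalized oriented volume of \emph{every} bounded flexible polyhedron in an odd-dimensional Lobachevsky space is constant during \emph{every} flexion of it. Applied to~$P$, this statement says precisely that the Bellows Conjecture holds for~$P$ in the strong sense required by Theorem~\ref{thm_conditional}, namely that the volume is constant along all flexions of~$P$ (cf.\ the Remark following that theorem, which warns that constancy along a single flexion would not be enough).

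Having verified its hypothesis, I would then invoke Theorem~\ref{thm_conditional} directly to conclude that the Dehn invariant~$\D(P_t)$ is constant during the flexion~$P_t$. Since~$P_t$ was an arbitrary flexion of~$P$, the corollary follows.

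I do not expect a genuine obstacle here. The only delicate point — that Theorem~\ref{thm_conditional} demands volume constancy for \emph{all} flexions of~$P$ rather than merely for the one under consideration — is supplied automatically, because Theorem~1.1 of~\cite{Gai15b} is a uniform statement about the entire class of bounded polyhedra in odd-dimensional Lobachevsky spaces. Hence the corollary is a purely formal consequence of the two cited results, and no new construction or estimate is needed; the analogous corollary for polyhedra with sufficiently small edge lengths would be deduced in exactly the same way from Theorem~\ref{thm_conditional} and Theorem~1.1 of~\cite{Gai17}.
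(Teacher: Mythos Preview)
Your proposal is correct and matches the paper's own treatment: the paper states explicitly that this corollary is an immediate consequence of Theorem~\ref{thm_conditional} together with Theorem~1.1 in~\cite{Gai15b}, without further argument. Your care about the ``all flexions'' hypothesis is appropriate and is exactly why the citation to~\cite{Gai15b} suffices.
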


\begin{cor}
Let $\X^n$ be either~$\bS^n$ or~$\bL^n$, where $n\ge 3$. Let $P_t$ be a simplicial flexible polyhedron in~$\X^n$ with $m$ vertices such that all edges of~$P_t$ have lengths smaller than $2^{-m^2(n+4)}$. Then the Dehn invariant of~$P_t$ is constant during the flexion.
\end{cor}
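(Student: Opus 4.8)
The plan is to derive the statement by combining Theorem~\ref{thm_conditional} with the small-edge-length case of the Bellows Conjecture established in~\cite{Gai17}, so the argument is short and the only real work is checking that the hypotheses match up. First I would record the elementary but crucial observation that the data on which the hypothesis bears — the multiset of edge lengths of~$P_t$ — is a flexion invariant: by definition of a flexible polyhedron every face of~$P_t$ remains congruent to itself throughout the deformation, hence every edge keeps its length. Consequently, if all edges of~$P_t$ are shorter than $2^{-m^2(n+4)}$ at one instant, they are shorter than $2^{-m^2(n+4)}$ at every instant, and in fact this bound holds along \emph{every} flexion of the underlying polyhedron, not merely along the given path.

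Next I would invoke Theorem~1.1 of~\cite{Gai17}, which states precisely that a simplicial flexible polyhedron in~$\bS^n$ or~$\bL^n$ with $m$ vertices, all of whose edges have length less than $2^{-m^2(n+4)}$, has constant generalized oriented volume during its flexion. By the previous paragraph this conclusion applies to every flexion of~$P_t$. Therefore the Bellows Conjecture holds for~$P_t$ in exactly the form demanded in the statement of Theorem~\ref{thm_conditional}, namely with the volume constant during \emph{every} flexion.

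Finally I would apply Theorem~\ref{thm_conditional} itself, which upgrades the constancy of the volume along all flexions to the constancy of the Dehn invariant~$\D(P_t)$, and this completes the proof. The one point that genuinely requires care — and the closest thing to an obstacle — is the quantifier in the hypothesis of Theorem~\ref{thm_conditional}: as the remark following that theorem emphasizes, knowing that the volume is constant along a single flexion is not enough, so it is essential that the edge-length bound be a property of the polyhedron preserved along every flexion rather than an artifact of one particular flexion path; this is exactly what the flexion-invariance of edge lengths guarantees. (In the spherical case one should, as usual, work inside the open hemisphere~$\bS^n_+$, which is the setting of~\cite{Gai17}.)
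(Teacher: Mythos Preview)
Your proposal is correct and follows exactly the route the paper intends: the corollary is stated as an immediate consequence of Theorem~\ref{thm_conditional} together with Theorem~1.1 of~\cite{Gai17}, and you supply precisely that deduction, including the care about the ``every flexion'' quantifier highlighted in the remark after Theorem~\ref{thm_conditional}.
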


\section{Basic definitions and notation}

\subsection{Flexible polyhedra}
We always normalize metrics of sphere~$\bS^n$ and of Lobachevsky space~$\bL^n$ so that they have sectional curvatures~$1$ and~$-1$, respectively.
To deal with~$\bS^n$ and~$\bL^n$ simultaneously, we shall conveniently introduce the parameter~$\varepsilon$ that is equal to~$1$ for~$\bS^n$ and to~$-1$ for~$\bL^n$.
We identify~$\bS^n$ and~$\bL^n$ with their standard vector models, i.\,e., with the unit sphere in Euclidean vector space~$\R^{n+1}$, and with the half of the hyperboloid $\langle x,x\rangle=1$, $x_0>0$ in  pseudo-Euclidean vector space~$\R^{1,n}$.
We denote by $x_0,\ldots,x_n$ the standard coordinates in  either~$\R^{n+1}$ or~$\R^{1,n}$
such that the (pseudo-)Euclidean scalar product in these coordinates writes as
$$
\langle x,y\rangle=x_0y_0+\varepsilon(x_1y_1+\cdots+x_ny_n).
$$

Let $\Delta^k$ be an affine simplex with vertices $v_0,\ldots,v_k$. A mapping $g\colon\Delta^k\to\X^n$, where $\X^n$ is either~$\bS^n$ or~$\bL^n$, will be called \textit{pseudo-linear} if it satisfies
$$
g(\lambda_0v_0+\cdots+\lambda_kv_k)=\frac{\lambda_0g(v_0)+\cdots+\lambda_kg(v_k)}{|\lambda_0g(v_0)+\cdots+\lambda_kg(v_k)|}
$$
for any $\lambda_0,\ldots,\lambda_k$ such that $\lambda_0+\cdots+\lambda_k=1$, where $|x|=\sqrt{\langle x,x\rangle}$.

If $\X^n=\E^n$, then we denote by $x_1,\ldots,x_n$ the standard  Cartesian coordinates in~$\E^n$.

A $k$-dimensional \textit{pseudo-manifold\/} is a finite simplicial complex~$K$ such that
\begin{enumerate}
\item every simplex of~$K$ is contained in a $k$-dimensional simplex,
\item every $(k-1)$-dimensional  simplex of~$K$ is contained in exactly two $k$-dimensional simplices,
\item $K$ is \textit{strongly connected}, i.\,e., the complement~$K\setminus\mathrm{Sk}^{k-2}(K)$ is connected, where $\mathrm{Sk}^{k-2}(K)$ is the $(k-2)$-skeleton of~$K$.
\end{enumerate}
A pseudo-manifold~$K$ is said to be \textit{oriented\/} if its $k$-dimensional simplices are endowed with compatible orientations.

Let $K$ be an $(n-1)$-dimensional pseudo-manifold. A \textit{polyhedron\/} of combinatorial type~$K$ is a mapping $P\colon K\to\X^n$ whose restriction to every face of~$K$ is affine linear in the case~$\X^n=\E^n$ and pseudo-linear in the cases $\X^n=\bS^n$ and~$\X^n=\bL^n$.

A \textit{flexion} of a polyhedron of combinatorial type~$K$ is a continuous family of polyhedra $P_t\colon K \to\X^n$, where $t$ runs over some interval $(a,b)$, such that for any $t_1,t_2\in(a,b)$, every face of~$P_{t_1}$ is congruent to the corresponding face of~$P_{t_2}$. The flexion is said to be \textit{non-trivial} if $P_{t_1}$  cannot be obtained from~$P_{t_2}$ by an isometry of~$\X^n$ for any sufficiently close to each other~$t_1$ and~$t_2$. A polyhedron is called \textit{flexible} if it admits a non-trivial flexion.

The above definitions allow only simplicial polyhedra. This is not a restriction, since any polyhedral surface has a simplicial subdivision. If the initial polyhedral surface is flexible, then its simplicial subdivision will still admit all the same flexions and, possibly, some new flexions will appear. Hence Theorems~\ref{thm_main} and~\ref{thm_conditional} for arbitrary flexible polyhedra will follow from these theorems for simplicial flexible polyhedra. So further we always deal with simplicial polyhedra only.

We say that a polyhedron $P\colon K\to\X^n$ has \textit{non-degenerate faces} if for any simplex $\sigma=[v_0\ldots v_k]$ of~$K$ the image $P(\sigma)$ is a non-degenerate simplex of the same dimension~$k$. For $\E^n$, this condition means that the points $P(v_0),\ldots,P(v_k)$ are affinely independent. For $\bS^n$ and~$\bL^n$, this condition means that the vectors $P(v_0),\ldots,P(v_k)$ are linearly independent (in the vector model). Further, we shall always consider only polyhedra with non-degenerate faces. This is also not a restriction, since there is a standard procedure that turns an arbitrary flexible polyhedron to a flexible polyhedron with non-degenerate faces and with the same underlying geometric polyhedral surface, see~\cite[Section~11]{Gai15b}.

\begin{remark}
There exists a more general  definition of a polyhedron called \textit{cycle polyhedron}. In this definition a pseudo-manifold~$K$ is replaced with an arbitrary simplicial cycle, see~\cite{Gai12},~\cite{Gai17}. All results of the present paper can be easily extended to the case of cycle polyhedra. 
\end{remark}

\subsection{Configuration spaces}

Let $m$ and~$r$ be the numbers of vertices and edges of~$K$, respectively, and  let $\fV(K)$ and $\fE(K)$ be the sets of vertices and edges of~$K$, respectively. A polyhedron $ P\colon K\to \X^n$ is determined solely by the positions of its vertices. 
For each vertex $v$ of~$K$, we put $\bx_v=P(v)$, and we denote by $x_{v,1},\ldots,x_{v,n}$ (in the case $\X^n=\E^n$) and by $x_{v,0},\ldots,x_{v,n}$ (in the cases $\X^n=\bS^n$ and $\X^n=\bL^n$) the coordinates of~$\bx_v$. We put $N=mn$ if $\X^n=\E^n$ and $N=m(n+1)$ if $\X^n=\bS^n$ or $\X^n=\bL^n$, and consider the space~$\R^N$ with the coordinates $x_{v,j}$, where $v\in\fV(K)$, $j=(0,)1,\ldots,n$. We consider $x_{v,j}$ as coordinates on the space of all polyhedra $P\colon K\to \X^n$, and identify every polyhedron~$P$ with the corresponding point in~$\R^N$. Below, $\R^N$ is referred to as the \textit{space of all polyhedra of combinatorial type~$K$}. 

Now, fix a set $\bell=(\ell_e)$ of positive real numbers indexed by edges $e\in\fE(K)$. 
Let $\Sigma=\Sigma_{K,\bell,\X^n}\subseteq\R^N$ be the subset consisting of all polyhedra  $P\colon K\to\X^n$ with the given set of edge lengths~$\bell$. (This subset may be empty.)
The set~$\Sigma$ will be called the \textit{configuration space} of all polyhedra of the given combinatorial type and edge lengths.

If $\X^n=\E^n$, then $\Sigma$ is the algebraic subset of~$\R^N=\R^{mn}$ given by the $r$ equations
\begin{equation}
|\bx_u-\bx_v|^2=\sum_{j=1}^n(x_{u,j}-x_{v,j})^2=\ell_{[uv]}^2,\qquad [uv]\in\fE(K).
\end{equation}

If $\X^n=\bS^n$, then $\Sigma$ is the algebraic subset of~$\R^N=\R^{m(n+1)}$ given by the $m+r$ equations
\begin{align}
\langle\bx_u,\bx_u\rangle&=\sum_{j=0}^nx_{u,j}^2=1,&u&\in\fV(K),\\
\langle\bx_u,\bx_v\rangle&=\sum_{j=0}^nx_{u,j}x_{v,j}=\cos\ell_{[uv]},& [uv]&\in\fE(K).
\end{align}

If $\X^n=\bL^n$, then $\Sigma$ is the semi-algebraic subset of~$\R^N=\R^{m(n+1)}$ given by the $m+r$ equations
\begin{align}\label{eq_system_L1}
\langle\bx_u,\bx_u\rangle&=x_{u,0}^2-\sum_{j=1}^nx_{u,j}^2=1,&u&\in\fV(K),\\
\label{eq_system_L2}\langle\bx_u,\bx_v\rangle&=x_{u,0}x_{v,0}-\sum_{j=1}^nx_{u,j}x_{v,j}=\cosh\ell_{[uv]},& [uv]&\in\fE(K),
\end{align}
and one inequality
\begin{equation}\label{eq_system_L3}
x_{u_0,0}>0,
\end{equation}
where $u_0$ is any chosen vertex of~$K$. It is easy to see that, since $K$ is connected, equations~\eqref{eq_system_L2} and inequality~\eqref{eq_system_L3} imply the inequalities $x_{u,0}>0$ for all other vertices $u\in\fV(K)$. 

Though in the case~$\X^n=\bL^n$, the set $\Sigma$ is not an algebraic subset of~$\R^{N}$, the union $\Sigma\cup(-\Sigma)$ is the algebraic subset of~$\R^{N}$ given by $m+r$ equations~\eqref{eq_system_L1} and~\eqref{eq_system_L2}. Hence, $\Sigma$ is the union of several connected components of an algebraic subset of~$\R^N$.

Let $\Omega=\Omega_{K,\X^n}\subseteq\R^N$ be the subsets consisting of all polyhedra with non-degenerate faces. Notice that the property of having non-degenerate faces depends only on the set of edge lengths~$\bell$. Namely, the set of positive numbers $\bell=(\ell_e)_{e\in\fE(K)}$ will be called \textit{non-degenerate} if for each simplex $\sigma=[v_0\ldots v_k]$ in~$K$, there exists a non-degenerate simplex~$[p_0\ldots p_k]$ in~$\X^n$ with edge lengths $\dist_{\X^n}(p_i,p_j)=\ell_{[v_iv_j]}$. If $\bell$ is non-degenerate, then all polyhedra in~$\Sigma_{K,\bell,\X^n}$ have non-degenerate faces, i.\,e., $\Sigma_{K,\bell,\X^n}\subseteq \Omega_{K,\X^n}$. Also, the set~$\Omega_{K,\X^n}$ is the union of the configuration spaces $\Sigma_{K,\bell,\X^n}$ over all non-degenerate~$\bell$.

Obviously, the group $\Isom(\X^n)$ of isometries of~$\X^n$ acts naturally on the space~$\Omega$, and every configuration space $\Sigma_{K,\bell,\X^n}$ is invariant under this action. A flexible polyhedron~$P_t$ is just a continuous path in $\Sigma_{K,\bell,\X^n}$ that does not stay in a single $\Isom(\X^n)$-orbit. Hence, the existence of a flexible polyhedron  $P\colon K\to\X^n$ with the set of edge lengths~$\bell$ is equivalent to the existence of a connected component of $\Sigma_{K,\bell,\X^n}$ consisting of several (hence, infinitely many) $\Isom(\X^n)$-orbits.

\subsection{Dihedral angles and Dehn invariant}
We denote by~$\fF_k(K)$ the set of $k$-dimen\-sional simplices of~$K$.

If a polyhedron $P\colon K\to\X^n$ is embedded, then, for each $\sigma\in\fF_{n-2}(K)$, we can consider the interior dihedral angle of the polyhedron~$P$ at the codimension~$2$ face~$P(\sigma)$. (If $\X^n=\bS^n$, then we should additionally agree which of the two components of $\bS^n\setminus P(K)$ is interior.) Unfortunately, there is no reasonable way to define the dihedral angles~$\varphi_{\sigma}(P)$ for all polyhedra $P\colon K\to\X^n$ with non-degenerate faces so that they be continuous real-valued functions on~$\Omega$. Nevertheless, we can define \textit{oriented dihedral angles} of~$P$ modulo~$2\pi$ that give continuous functions $\varphi_{\sigma}\colon\Omega\to\R/2\pi\Z$. The definition is as follows, cf.~\cite[Section~9]{Gai15b}. Let $\tau_1$ and $\tau_2$ be the two $(n-1)$-dimensional simplices of~$K$ containing~$\sigma$. Take any point~$\bx$ in~$ P(\sigma)$. Let $\bn_1$ and~$\bn_2$ be the unit vectors in the tangent spaces $T_{\bx} P(\tau_1)$ and~$T_{\bx} P(\tau_2)$ orthogonal to~$T_{\bx} P(\sigma)$ and pointing inside the simplices~$ P(\tau_1)$ and~$ P(\tau_2)$, respectively. Let $\bm_1$ and $\bm_2$ be the \textit{outer normal vectors\/} to $ P(\tau_1)$ and~$ P(\tau_2)$ at~$\bx$, i.\,e., the unit vectors in~$T_{\bx}\X^n$ orthogonal to $T_{\bx} P(\tau_1)$ and to~$T_{\bx} P(\tau_2)$, respectively, such that the product of the direction of~$\bm_i$ and the positive orientation of~$ P(\tau_i)$ yields the positive orientation of~$\X^n$. We say that the positive direction of rotation around~$ P(\sigma)$ is from $\bm_1$ to~$\bn_1$, and denote by~$\varphi_{\sigma}=\varphi_{\sigma}( P)$ the angle from~$\bn_1$ to~$\bn_2$ in this positive direction. This angle is a well-defined element of~$\R/2\pi\Z$. It is easy to see that~$\varphi_{\sigma}$ is independent of the choice of the point~$\bx$ and does not change if we interchange the simplices~$\tau_1$ and~$\tau_2$.

The \textit{Dehn invariant} of a polyhedron $P$ with non-degenerate faces is, by definition, an element of the group $\R\otimes(\R/\pi\Z)$ given by
\begin{equation}\label{eq_Dehn}
\D(P)=\sum_{\sigma\in\fF_{n-2}(K)}\vol_{n-2}(P(\sigma))\otimes\varphi_\sigma(P).
\end{equation} 
(Here and further, $\otimes$ always means tensor product over~$\Z$.)

If $P\colon K\to\X^n$ is an embedding and the orientations of~$K$ and~$\X^n$ agree in the sense that the vectors~$\bm_i$ defined above do indeed point outwards, then the interior dihedral angles~$\hphi_{\sigma}(P)$ of the polytope enclosed by the polyhedral surface~$P(K)$ belong to~$(0,2\pi)$ and represent~$\varphi_{\sigma}(P)$ in~$\R/2\pi\Z$. Then~\eqref{eq_Dehn} turns into the standard expression~\eqref{eq_Dehn_initial} for the Dehn invariant of an embedded polyhedron. 

\subsection{Scissors congruence and volume}\label{subsection_vol}
\label{subsection_scissors}
To introduce rigorously the concept of scissors congruence, it is convenient to work with the following definition of a polytope: A \textit{polytope} in~$\X^n$ is a compact subset $P\subseteq\X^n$ that can be written as the union of a finite number of non-degenerate $n$-dimensional convex simplices in~$\X^n$. If $P$, $P_1,\ldots,P_k$ are polytopes, $P=P_1\cup\cdots\cup P_k$, and the interiors of $P_1,\ldots,P_k$  are pairwise disjoint, we say that $P$ decomposes into $P_1,\ldots,P_k$ and write $P=P_1\sqcup\cdots\sqcup P_k$.
Two polytopes $P$ and~$Q$ are said to be \textit{scissors congruent} if they admit decompositions $P=P_1\sqcup\cdots\sqcup P_k$ and $Q=Q_1\sqcup\cdots\sqcup Q_k$ such that $P_i=g_i(Q_i)$ for some isometries $g_i\in\Isom(\X^n)$, $i=1,\ldots,k$.

The \textit{scissors congruence group}~$\CP(\X^n)$ is the quotient of the free Abelian group on symbols~$[P]$ for all polytopes~$P$ in~$\X^n$ modulo the relations:
\begin{align*}
[P]&=[P_1]+[P_2]&&\text{for }P=P_1\sqcup P_2,\\
[P]&=[g(P)]&&\text{for }g\in\Isom(\X^n).
\end{align*}

It is easy to see that $[P]=[Q]$ if and only if the polytopes~$P$ and~$Q$ are \textit{stably scissors congruent}, i.\,e., there exist polytopes~$P'$ and~$Q'$ such that $P'$ is scissors congruent to~$Q'$ and $P\sqcup P'$ is scissors congruent to~$Q\sqcup Q'$. By a theorem of Zylev~\cite{Zyl65} (see e.\,g.~\cite{Sah79} for a proof), stable scissors congruence implies scissors congruence. Hence $[P]=[Q]$ if and only if the polytopes~$P$ and~$Q$ are scissors congruent. 

It is a standard fact that there are homomorphisms
\begin{align*}
\CV&\colon\CP(\X^n)\to \R,&\D&\colon\CP(\X^n)\to \R\otimes(\R/\pi\Z)
\end{align*}
called \textit{volume} and \textit{Dehn invariant}, respectively, such that, for each polytope~$P$, $\CV([P])$ and~$\D([P])$ are the volume and the Dehn invariant of~$P$, respectively. 

The results of Sydler~\cite{Syd65} and Jessen~\cite{Jes68} imply that, for $n=3$ and~$n=4$, an element $\xi\in\CP(\E^n)$ is trivial if and only if $\CV(\xi)=0$ and~$\Delta(\xi)=0$. 
It is not known whether the same is true for $\E^n$, where $n\ge 5$, and for non-Euclidean spaces of constant curvature. 

We would like to extend the relation of scissors congruence to not necessarily embedded polyhedra $P\colon K\to\X^n$. More precisely, we would like to assign an element~$[P]\in\CP(\X^n)$ to any such polyhedron. This can be done using the following standard approach, which is usually used to define the generalized oriented volume of a not necessarily embedded polyhedron.

First, suppose that $\X^n$ is either~$\R^n$ or~$\Lambda^n$. Then the one-point compactification~$\X^n\cup\{\infty\}$ of~$\X^n$ is homeomorphic to $n$-dimensional sphere.  For any point~$\bx\in \X^n\setminus P(|K|)$, we denote by~$\lambda(\bx)$ the \textit{linking number\/} of the pair of points~$\{\bx,\infty\}$ and the $(n-1)$-dimensional singular cycle~$P(K)$. By definition (cf.~\cite[Sect.~77]{SeTh80}), this linking number is equal to the algebraic intersection number of a generic piecewise smooth curve~$\gamma$ going from~$\bx$ to~$\infty$ and the cycle $P(K)$.  It is a standard fact that this algebraic intersection number is independent of the choice of~$\gamma$. Then $\lambda(\bx)$ is 
an almost everywhere defined piecewise constant function on~$\X^n$ with compact support.
For every integer~$k$, we denote by $R_k$ the closure of the set of all points $\bx\in\X^n$ such that $\lambda(\bx)=k$. It is easy to check that $R_k$ is a polytope unless $k=0$. We put,
$$
[P]=\sum_{k\ne 0}k[R_k] \in\CP(\X^n).
$$
The intuition behind this definition is that the polyhedral surface $P(K)$ encloses every polytope~$R_k$ exactly $k$ times.

Two polyhedra $P_1\colon K_1\to\X^n$ and $P_2\colon K_2\to\X^n$ are said to be \textit{scissors congruent} if and only if $[P_1]=[P_2]$ in~$\CP(\X^n)$. If $P\colon K\to\X^n$ is an embedded polyhedron, then, up to sign depending on the orientation, $[P]$ coincides with the element of~$\CP(\X^n)$ corresponding to the polytope enclosed by the embedded polyhedral surface~$P(K)$. So for embedded polyhedra, the above definition of  scissors congruence is equivalent to the usual definition of scissors congruence for polytopes.

For any polyhedron $P\colon K\to\X^n$, the value of the homomorphism $\CV\colon\CP(\X^n)\to \R$ on the element~$[P]$ is equal to
$$
\CV(P)=\sum_{k\in\Z}k\vol_n(R_k)=\int_{\X^n}\lambda(\bx)\,d\vol_{\X^n},
$$
which coincides with the standard definition of the \textit{generalized oriented volume} of a not necessarily embedded polyhedron. (Here we denote by $d\vol_{\X^n}$ the standard volume element in~$\X^n$.) Also, it is easy to check that the value of the homomorphism $\D\colon\CP(\X^n)\to \R\otimes(\R/\pi\Z)$ on the element~$[P]$ is equal to
the Dehn invariant~$\D(P)$ given by~\eqref{eq_Dehn}.

Second, suppose that $\X^n=\bS^n$. If a polyhedron $P\colon K\to\bS^n$ is contained in  open hemisphere~$\bS^n_+$, then we can consider the one-point compactification of~$\bS^n_+$, and repeat the above construction literally.  

Assume that the polyhedron~$P$ is not contained in~$\bS^n_+$. Then there is no canonical way to assign an element~$[P]\in\CP(\bS^n)$ to~$P$. Indeed, even for an embedded polyhedron, one has two possibilities to choose which of the two components of $\bS^n\setminus P(K)$ should be taken for the polytope enclosed by~$P(K)$. Nevertheless, there is a canonical way to assign to~$P$ an element $[P]\in \CP'(\bS^n)$, where $\CP'(\bS^n)$ is the quotient of~$\CP(\bS^n)$ by the infinite cyclic group generated by~$[\bS^n]$. (Notice that, according to the above definition, the whole sphere~$\bS^n$ is a polytope, hence, an element~$[\bS^n]\in\CP(\bS^n)$ is well defined.) Namely, we can take for~$\infty$ an arbitrary point in~$\bS^n\setminus P(K)$, repeat the above construction, and obtain an element $[P]\in\CP(\bS^n)$. Changing the point~$\infty$, we arrive to other elements of~$\CP(\bS^n)$. However, it is not hard to check that the difference of any two such elements of~$\CP(\bS^n)$ is a multiple of~$\bS^n$. Therefore, the image of~$[P]$ in~$\CP'(\bS^n)$ is well defined. 

The volume homomorphism $\CV\colon\CP(\bS^n)\to\R$ induces a well-defined homomorphism $\CV\colon\CP(\bS^n)\to\R/\sigma_n\Z$, where 
$$
\sigma_n=\vol_n(\bS^n)=\frac{2\pi^{\frac{n+1}2}}{\Gamma\left(\frac{n+1}2\right)}\ .
$$
Thus the generalized oriented volume~$\CV(P)$ of a polyhedron $P\colon K\to\bS^n$  is a well-defined element of~$\R/\sigma_n\Z$.

Since $\D([\bS^n])=0$, the Dehn invariant $\D([P])$ is well defined and again coincides with the Dehn invariant~$\D(P)$ given by~\eqref{eq_Dehn}. 

\subsection{Schl\"afli's formula} Schl\"afli's formula is a fundamental relation between the differentials of dihedral angles of a polyhedron that is deformed continuously preserving the combinatorial type. For a polyhedron $P\colon K\to\X^n$, this formula has the form:
\begin{equation}\label{eq_schlaefli}
\sum_{\sigma\in\fF_{n-2}(K)}\vol_{n-2}(P(\sigma))\,d\varphi_{\sigma}(P)=\left\{
\begin{aligned}
&0&&\text{if }\X^n=\E^n,\\
&\varepsilon (n-1)\, d\CV(P)&&\text{if }\X^n=\bS^n\text{ or }\bL^n. 
\end{aligned}
\right.
\end{equation} 

\subsection{Stratifications of real affine algebraic sets and holomorphic functions} 
Suppose that $\K$ is either~$\R$ or~$\C$, and let $X\subset\K^N$ be an irreducible affine variety of dimension~$d$. Let~$I_{X}\subseteq \K[x_1,\ldots,x_N]$ be the ideal   consisting of all polynomials that vanish identically on~$X$, and let $\K[X]=\K[x_1,\ldots,x_N]/I_{X}$ be  the ring of regular functions on~$X$. 

The variety~$X$ has two natural topologies, the \textit{analytic topology} induced by the standard Hausdorff topology in~$\K^N$ and the \textit{Zariski topology}. We agree that, unless otherwise stated explicitly, all topological properties (e.\,g. `open') should be understood with respect to the analytic topology. We shall always write explicitly `Zariski open' if we mean open with respect to the Zariski topology.

Recall that a point $x\in X$ is said to be \textit{regular} if the intersection of the kernels of the differentials $df|_x$ for all $f\in I_X$ has dimension~$d$, and is said to be \textit{singular} otherwise. We denote by~$X^{\reg}$ and~$X^{\sing}$ the sets of regular and singular points of~$X$, respectively. The set $X^{\reg}$ has a natural structure of a $d$-dimensional $\K$-analytic manifold. Hence, we can consider $\K$-analytic functions on open subsets of~$X^{\reg}$. If $\K=\C$ and $U$ is an open subset of~$X^{\reg}$, then we denote by $\CO(U)$ the ring of holomorphic (i.\,e. single-valued complex analytic) functions on~$U$. We shall need the following consequence of Liouville's theorem on entire functions, see~\cite[Lemma~9.7]{Gai15b}.

\begin{lem}\label{lem_log}
Suppose that $X$ is a smooth irreducible complex affine variety, $\psi\in\CO(X)$, and there exist non-zero regular functions $g_1,\ldots,g_N\in\C[X]$ such that 
\begin{equation*}%\label{eq_estimate}
\Im \psi(x)\le \max_{n=1,\ldots,N} \ln|g_n(x)|
\end{equation*}
for all $x\in X$.  Then $\psi$ is a constant. \textnormal{(}Here we use the convention $\ln 0=-\infty$.\textnormal{)}
\end{lem}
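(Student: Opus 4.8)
The plan is to reduce the statement to the case where $X$ is a smooth affine \emph{curve}, and then to study the nowhere vanishing holomorphic function $h=e^{-i\psi}$ (which satisfies $|h|=e^{\Im\psi}\le\max_n|g_n|$) near the points at infinity of that curve.

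\emph{Reduction to curves.} It suffices to show that $\psi(x_0)=\psi(x_1)$ for every pair of points $x_0,x_1\in X$ (the case $\dim X=0$ being trivial). Since $X$ is irreducible of dimension at least one, there is an irreducible closed algebraic curve $C\subseteq X$ containing both $x_0$ and $x_1$; this is a standard fact, obtained by passing to a projective closure of $X$ and repeatedly taking general hyperplane sections through the two chosen points, and then intersecting the resulting projective curve back with $X$. Let $\nu\colon\widetilde C\to C\hookrightarrow X$ be the composition of the normalisation of $C$ with the inclusion. Then $\widetilde C$ is a smooth irreducible affine curve, $\nu$ is surjective, the pullback $\widetilde\psi=\psi\circ\nu$ lies in $\CO(\widetilde C)$ (here we use that $X$ is smooth, so that $\psi$ is genuinely holomorphic on all of $X$), and the pullbacks $\widetilde g_n=g_n\circ\nu$ are regular functions on $\widetilde C$ with $\Im\widetilde\psi\le\max_n\ln|\widetilde g_n|$. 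Since $\Im\widetilde\psi$ is everywhere finite, at every point of $\widetilde C$ at least one $\widetilde g_n$ is nonzero; hence, discarding those $\widetilde g_n$ that vanish identically on $\widetilde C$, we may assume all the remaining $\widetilde g_n$ are nonzero on $\widetilde C$ without affecting the inequality. So it is enough to prove the lemma with $\widetilde C$ in place of $X$.

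\emph{The curve case.} Write $\widetilde C=\overline C\setminus\{p_1,\dots,p_r\}$, where $\overline C$ is the smooth projective completion of $\widetilde C$. Each $\widetilde g_n$ is then a rational function on $\overline C$ with poles only among the $p_i$, hence is meromorphic at every $p_i$. Put $h=e^{-i\widetilde\psi}\in\CO(\widetilde C)$; it is nowhere vanishing on $\widetilde C$ and satisfies $|h|=e^{\Im\widetilde\psi}\le\max_n|\widetilde g_n|$. Therefore, in a local coordinate $z$ at each puncture $p_i$ the modulus $|h|$ is bounded by $C|z|^{-k}$ for some $C,k$, so $z^k h$ is bounded and $h$ extends meromorphically across $p_i$ by the Riemann removable singularity theorem. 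Thus $h$ is a rational function on $\overline C$, and since it is holomorphic and nonvanishing on $\widetilde C$, its divisor $(h)$ is supported on $\{p_1,\dots,p_r\}$. Now from $h=e^{-i\widetilde\psi}$ we get $d\widetilde\psi=i\,dh/h$; since $\widetilde\psi$ is single-valued on $\widetilde C$, the meromorphic $1$-form $dh/h$ has all periods equal to zero, so in particular its residue at each $p_i$, namely $\mathrm{ord}_{p_i}(h)$, vanishes. Hence $(h)=0$, so the rational function $h$ has no zeros or poles on $\overline C$ and is a nonzero constant. Consequently $\widetilde\psi=i\log h$ is constant, and $\psi(x_0)=\widetilde\psi=\psi(x_1)$. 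As $x_0,x_1$ were arbitrary, $\psi$ is constant.

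\emph{Main obstacle.} The genuinely delicate step is the reduction to curves: one needs the (standard but nontrivial) fact that any two points of an irreducible variety lie on an irreducible curve, and one must verify that passing to the normalisation preserves the hypotheses — in particular that not all of the pulled back functions $\widetilde g_n$ become identically zero, which is exactly where finiteness of $\Im\widetilde\psi$ enters. Everything afterwards is routine: the growth bound turns $e^{-i\psi}$ into a rational function on the compactified curve, and single-valuedness of its logarithm (equivalently, vanishing of the residues of $dh/h$) forces that function to be constant.
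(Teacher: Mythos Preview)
Your argument is correct. The reduction to curves via an irreducible curve through two given points and normalisation is standard, and the curve case is handled cleanly: the growth bound forces $h=e^{-i\widetilde\psi}$ to extend meromorphically to the smooth compactification, exactness of $d\widetilde\psi$ on $\widetilde C$ kills the residues of $dh/h$ at the punctures, and hence $h$ is a nowhere-zero holomorphic function on a compact Riemann surface, i.e.\ a constant. One cosmetic point: when you write ``all the remaining $\widetilde g_n$ are nonzero on $\widetilde C$'' you mean \emph{not identically zero}; this is clear from context but worth phrasing unambiguously.

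As for comparison with the paper: the paper does not actually prove this lemma. It records it as a consequence of Liouville's theorem on entire functions and refers to~\cite[Lemma~9.7]{Gai15b} for the proof. Your approach is in the same spirit---the decisive step is still that a holomorphic function without zeros or poles on a compact Riemann surface is constant---but you have supplied a complete, self-contained argument in place of the citation.
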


Now, suppose that $X\subseteq\K^N$ is an arbitrary algebraic subset and $X_1,\ldots,X_k$ are the irreducible components of~$X$. Recall that a point $x\in X$ is said to be \textit{regular} if it is a regular point of certain~$X_i$ and does not belong to all other~$X_j$'s.  By definition, $\dim X=\max_{i=1,\ldots,k}\dim X_i$.

 Any real algebraic subset $X\subseteq\R^N$ possesses a natural stratification that can be constructed in the following way, see~\cite[Section 11(b)]{Whi57}. Let $d=\dim X$. Then $X=Y\cup Z$, where $Y$ is the the union of all $d$-dimensional irreducible components of~$X$, and $Z$ is the union of all irreducible components of~$X$ of dimensions strictly less than~$d$. Put $M=Y^{\reg}\setminus Z$ and $X'=Y^{\sing}\cup Z$; then $X=M\cup X'$ and $M\cap X'=\varnothing$. Then $M$ is a smooth $d$-dimensional manifold without boundary. It is proved in~\cite{Whi57} that $M$ consists of finitely many connected components.  All connected components of~$M$ are taken for $d$-dimensional strata of the stratification.
 Then the same procedure is recursively applied to~$X'$, which is a real algebraic subset of~$\R^N$ of dimension strictly less than~$d$. The obtained stratification of~$X$ consists of finitely many strata. It will be called the \textit{standard stratification\/} of~$X$. By construction, every stratum~$S$ of this stratification is a connected open subset of the set of regular points of an irreducible real affine variety. This easily implies the following lemma, cf.~\cite[Proposition~8.2]{Gai15b}.

\begin{lem}\label{lem_strat}
Let $S$ be a stratum of the standard stratification of an algebraic subset~$X\subseteq\R^N,$ and let $\Xi$ be the Zariski closure of~$S$ in~$\C^N$. Then $\Xi$ is an irreducible complex affine variety, $\dim_{\C}\Xi=\dim_{\R}S,$ and $S\subseteq \Xi^{\reg}$. 
\end{lem}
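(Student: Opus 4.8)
The plan is to compare $S$ with the complexification of the irreducible real affine variety inside which it sits. By the construction of the standard stratification (see~\cite[Section~11(b)]{Whi57}), $S$ is a nonempty connected subset of~$\R^N$ that is open in $V^{\reg}$ for a suitable irreducible real affine variety $V\subseteq\R^N$; put $d=\dim_{\R}V$, so that $\dim_{\R}S=d$ and, by the real-analytic implicit function theorem, $S$ is a real-analytic $d$-dimensional submanifold of~$\R^N$. Let $I_V\subseteq\R[x_1,\dots,x_N]$ be the prime ideal of~$V$, and let $\widetilde V\subseteq\C^N$ be the complex algebraic set defined by $I_V\cdot\C[x_1,\dots,x_N]$, i.\,e.\ the Zariski closure of~$V$ in~$\C^N$. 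Since $\C[x]/I_V\C[x]=(\R[x]/I_V)\otimes_{\R}\C$ and extension of scalars along a field extension preserves Krull dimension and equidimensionality, $\widetilde V$ is equidimensional of complex dimension~$d$. Because $S\subseteq V\subseteq\widetilde V$, we have $\Xi\subseteq\widetilde V$; the goal is to show that $\Xi$ is exactly one irreducible component of~$\widetilde V$.

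First I would analyse $\widetilde V$ locally near a point $p\in S$. Since $p$ is a regular point of~$V$, there exist $f_1,\dots,f_{N-d}\in I_V$ whose differentials at~$p$ are $\R$-linearly independent, hence also $\C$-linearly independent. In a small ball $U\subseteq\C^N$ around~$p$, the set $W=\{f_1=\dots=f_{N-d}=0\}\cap U$ is a connected complex submanifold of dimension~$d$, and $\widetilde V\cap U\subseteq W$. Because the $f_i$ have real coefficients, the complex tangent space $T_pW=\bigcap_i\ker(df_i|_p)$ is the complexification of $T_pS=T_pV=\bigcap_i\ker(df_i|_p)_{\R}$; thus $S$ is a maximally totally real real-analytic submanifold of~$W$ near~$p$. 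Consequently, composing any holomorphic function on~$W$ that vanishes on~$S$ with the holomorphic extension of a real-analytic local parametrization of~$S$ --- which is a local biholomorphism precisely because $T_pS$ is a real form of~$T_pW$ --- and invoking that a holomorphic germ at $0\in\C^d$ vanishing on~$\R^d$ vanishes identically, one concludes that every holomorphic function on~$W$ vanishing on~$S$ vanishes near~$p$. In particular, were $\widetilde V\cap U$ a proper analytic subset of~$W$, it would be contained, after shrinking~$U$, in the zero set of some holomorphic $h\not\equiv 0$ on~$W$; but then $h$ vanishes on~$S$, so by the above $h\equiv 0$ near~$p$, a contradiction. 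Hence $\widetilde V\cap U=W$, so $p\in\widetilde V^{\reg}$, and $p$ lies on a unique irreducible component of~$\widetilde V$, necessarily of dimension~$d$.

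Next I would globalize. The local analysis shows that the map sending $q\in V^{\reg}$ to the unique irreducible component of~$\widetilde V$ through~$q$ is locally constant; since $S$ is connected, there is a single irreducible component $\Xi_0$ of~$\widetilde V$ with $S\subseteq\Xi_0$, and $\dim_{\C}\Xi_0=d$. Moreover, since $\widetilde V$ is smooth at each $p\in S$ and $\Xi_0$ is the only component through~$p$, we get $S\subseteq\Xi_0^{\reg}$ and $\Xi\subseteq\Xi_0$. For the reverse inclusion, let $h\in\C[x]$ vanish on~$S$. By the previous paragraph $h$ vanishes on a nonempty subset of~$\Xi_0^{\reg}$ that is open in the analytic topology; since the zero set of~$h$ in~$\Xi_0$ is Zariski closed, $\Xi_0$ is irreducible, and a proper Zariski-closed subset of an irreducible complex variety has empty interior in the analytic topology, $h$ must vanish on all of~$\Xi_0$. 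Hence $I(\Xi)=I(\Xi_0)$, so $\Xi=\Xi_0$: an irreducible complex affine variety with $\dim_{\C}\Xi=d=\dim_{\R}S$ and $S\subseteq\Xi^{\reg}$, as required.

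The main obstacle is the density step: one must verify that the purely $d$-\emph{real}-dimensional set~$S$ already determines, Zariski-theoretically, a full $d$-\emph{complex}-dimensional component of~$\widetilde V$, and that this component is irreducible. Everything hinges on the observation that at a regular point of the \emph{real} variety~$V$ the complex tangent space of~$\widetilde V$ is the complexification of the real tangent space --- a consequence of~$V$ being cut out by polynomials with real coefficients --- which makes~$S$ maximally totally real in~$\widetilde V$; combined with the identity theorem for holomorphic functions this forces the needed Zariski density. The remaining ingredients --- the equality $\dim_{\C}\widetilde V=d$, the local analytic geometry near~$p$, and the behaviour of regular points under passage to irreducible components --- are standard.
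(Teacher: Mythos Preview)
Your argument is correct. The key step---showing that at a regular real point $p\in S$ the complexification $\widetilde V$ is smooth of complex dimension $d$ and that $S$ is maximally totally real there, so that holomorphic functions vanishing on $S$ vanish identically near $p$---is exactly what forces $S$ to be Zariski dense in a single $d$-dimensional irreducible component of $\widetilde V$. The subsidiary facts you invoke (that $I_V\cdot\C[x]$ is radical because $\C/\R$ is separable and $\R[x]/I_V$ is a domain; that $\widetilde V$ is equidimensional of dimension $d$ since the extension $\R[x]/I_V\hookrightarrow\C[x]/I_V\C[x]$ is integral; that a real-analytic chart for $S$ complexifies to a local biholomorphism onto $W$) are all standard and correctly stated.

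As for comparison with the paper: the paper does not actually give a proof of this lemma. It simply observes that, by construction, each stratum $S$ is a connected open subset of the regular locus of an irreducible real affine variety, declares that the lemma ``easily'' follows from this, and points to \cite[Proposition~8.2]{Gai15b} for details. Your write-up is therefore not an alternative approach but rather a careful unpacking of precisely what ``easily implies'' means here. The essential content---complexify the irreducible real variety $V$ carrying $S$, use regularity of $V$ at points of $S$ to see that $S$ sits as a totally real submanifold inside the smooth locus of the complexification, and deduce Zariski density in a component by the identity principle---is the natural (indeed, the only reasonable) route, and matches what one finds when tracing the reference.
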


Suppose that $W$ is an open subset of~$\R^k$ and $\psi$ is a real analytic function defined on~$W$. Consider a path $\gamma$ in~$\C^k\supset\R^k$ that starts at a point $a\in W$, and try to continue analytically the function~$\psi$ along~$\gamma$. We can always perform this continuation at least until the path~$\gamma$ stays inside the disc of convergence of the Taylor series for~$\psi$ with centre~$a$. Later a singularity can occur. However, the following property will be crucial for us: \textit{The analytic continuation of~$\psi$ along~$\gamma$ is unique whenever exists.} Lemma~\ref{lem_strat} easily implies 
that every point $x\in S$ has a neighborhood~$U$ in~$\Xi^{\reg}$ such that the pair $(U,U\cap S)$ is biholomorphic to a pair $(V,V\cap\R^k)$ for some open subset $V\subseteq\C^k$, where $k=\dim_{\R}S$. Hence, the same property holds for the analytic continuation from~$S$ to~$\Xi^{\reg}$.

\begin{lem}\label{lem_strat_contin}
Let $S$ and~$\Xi$ be as in Lemma~\ref{lem_strat}. Suppose that $W$ is an open subset of~$S$, $\psi$ is a real analytic function defined on~$W$, and $\gamma$ is a path that starts in~$W$ and is contained in~$\Xi^{\reg}$. Then the analytic continuation of~$\psi$ along~$\gamma$ is unique whenever exists.
\end{lem}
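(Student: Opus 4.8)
The plan is to reduce the statement to the classical uniqueness of analytic continuation of a holomorphic germ along a path on a complex manifold. By Lemma~\ref{lem_strat}, $\Xi^{\reg}$ is a $k$-dimensional complex manifold, where $k=\dim_{\R}S$, and the only extra point beyond the classical fact is that the initial datum~$\psi$ a priori lives on the real slice $W\subseteq S$ rather than on an open subset of~$\Xi^{\reg}$.

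First I would make precise what ``the analytic continuation of~$\psi$ along~$\gamma$'' means. Put $a=\gamma(0)\in W$. By the local normal form recorded just before the statement (a consequence of Lemma~\ref{lem_strat}), there is a neighborhood~$U$ of~$a$ in~$\Xi^{\reg}$ and a biholomorphism of the pair $(U,U\cap S)$ onto a pair $(V,V\cap\R^k)$ for some open $V\subseteq\C^k$. Under this identification $\psi$ becomes a real analytic function on an open subset of~$\R^k$, and such a function admits a \emph{unique} germ of holomorphic extension at the image of~$a$: two holomorphic germs at a point of~$\C^k$ agreeing on the real slice have equal Taylor coefficients, hence coincide. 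Transporting back, $\psi$ determines a well-defined holomorphic germ $\hat\psi_a$ at $a\in\Xi^{\reg}$, and this is by definition the initial datum of the continuation along~$\gamma$. (If the continuation exists at all, it starts from this germ.)

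Next I would run the standard monodromy argument on the complex manifold~$\Xi^{\reg}$. Suppose $(f_t)_{t\in[0,1]}$ and $(g_t)_{t\in[0,1]}$ are two analytic continuations of~$\psi$ along~$\gamma$: that is, $f_t$ and $g_t$ are holomorphic germs at~$\gamma(t)$, depending continuously on~$t$ in the usual sense (locally each family is given by a single holomorphic function on a connected open subset of~$\Xi^{\reg}$ through which~$\gamma$ passes), with $f_0=g_0=\hat\psi_a$. Let $T=\{t\in[0,1]\colon f_t=g_t\text{ as germs at }\gamma(t)\}$. Then $0\in T$; the set $T$ is open because on a connected chart on which both continuations are represented by single holomorphic functions $F$ and~$G$, equality of the germs at one point of the chart forces $F\equiv G$ on the whole chart by the identity theorem; and $T$ is closed by exactly the same identity-theorem argument applied near a limit point of~$T$. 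Since $[0,1]$ is connected, $T=[0,1]$, whence $f_1=g_1$. This proves that the analytic continuation of~$\psi$ along~$\gamma$ is unique whenever it exists.

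I do not expect a genuine obstacle here; the lemma is of a routine ``standard fact'' character. The only step requiring care is the passage from the real analytic datum on~$S$ to a single holomorphic germ on~$\Xi^{\reg}$, which is precisely what the local model $(U,U\cap S)\cong(V,V\cap\R^k)$ furnished by Lemma~\ref{lem_strat} is designed for; after that, everything is the classical monodromy/identity-theorem bookkeeping on a complex manifold.
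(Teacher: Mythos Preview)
Your proposal is correct and follows essentially the same approach as the paper: the paper simply remarks that the local model $(U,U\cap S)\cong(V,V\cap\R^k)$ furnished by Lemma~\ref{lem_strat} reduces the claim to the standard uniqueness of analytic continuation from~$\R^k$ to~$\C^k$, and you have spelled out this reduction and the ensuing monodromy/identity-theorem argument in detail. The only substantive point, as the paper's subsequent remark emphasizes, is that $\dim_{\C}\Xi=\dim_{\R}S$ so that the real-analytic datum determines a \emph{unique} holomorphic germ, and you have identified and handled this correctly.
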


\begin{remark}
The crucial point here is that $\dim_{\C}\Xi=\dim_{\R}S$. If the complex dimension of a complex variety~$\Xi$ were greater than the real dimension of a real subvariety $S\subset\Xi$, then it would be possible that a real analytic function on~$S$ has infinitely many different continuations along a path in~$\Xi$. For instance, one can consider continuations of a real analytic function from~$\R$ to~$\C^2\supset\C\supset\R$.
\end{remark}

If $X_0$ is a connected component of a real algebraic set $X\subseteq\R^N$, then the restriction of the standard stratification of~$X$ to~$X_0$ will be called the \textit{standard stratification} of~$X_0$.

\section{Proof of the main result}

The generalized oriented volume~$\CV$, the Dehn invariant~$\Delta$, and the oriented dihedral angles~$\varphi_{\sigma}$ are functions on the space~$\Omega_{K,\X^n}\subseteq\R^N$ of all polyhedra $P\colon K\to\X^n$ with non-degenerate faces. The following theorem is a more precise reformulation of Theorems~\ref{thm_main}
and~\ref{thm_conditional}.

\begin{theorem}\label{thm_precise}
Let $\X^n$ be one of the spaces~$\E^n$, $\bS^n$, and~$\bL^n$, where $n\ge 3$. Let $K$ be an oriented $(n-1)$-dimensional pseudo-manifold, let $\bell$ be a non-degenerate set of edge lengths for~$K$, and let $\Sigma_0$ be a connected component of~$\Sigma=\Sigma_{K,\bell,\X^n}$. Assume that either $\X^n=\E^n$ or the generalized oriented volume $\CV$ is constant on~$\Sigma_0$. Then the Dehn invariant $\Delta$ is also constant on~$\Sigma_0$.
\end{theorem}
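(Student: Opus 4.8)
The plan is to reduce Theorem~\ref{thm_precise} to an algebraic statement about the Zariski closure of the configuration space in $\C^N$, and then to combine Schl\"afli's formula with the observation that, although the dihedral angles $\varphi_\sigma$ are transcendental functions, their exponentials $e^{i\varphi_\sigma}$ are algebraic. First I would make a reduction. For every $\sigma\in\fF_{n-2}(K)$ each edge of the simplex $\sigma$ is an edge of~$K$, so $P(\sigma)$ has the same set of edge lengths for all $P\in\Sigma$; since the faces are non-degenerate, $P(\sigma)$ is congruent to one fixed simplex, and hence $v_\sigma:=\vol_{n-2}(P(\sigma))$ is a constant depending only on $K$, $\bell$, $\X^n$. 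Thus $\D=\sum_\sigma v_\sigma\otimes\varphi_\sigma$ has \emph{constant} coefficients $v_\sigma$. Next I would pass to the standard stratification of $\Sigma_0$: it has finitely many strata, and as $\Sigma_0$ is connected the poset of its strata (ordered by $S'\le S$ iff $S'\subseteq\overline S$) is connected. So it is enough to prove $\D(P_0)=\D(P_1)$ whenever $P_0,P_1\in\overline S$ for a single stratum~$S$ of $\Sigma_0$. Fix such an~$S$ and let $\Xi\subseteq\C^N$ be its Zariski closure; by Lemma~\ref{lem_strat}, $\Xi$ is an irreducible complex affine variety with $\dim_\C\Xi=\dim_\R S$, and $S$ sits inside $\Xi^{\reg}$ as a maximally totally real submanifold.

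The crucial step is to show that $e^{i\varphi_\sigma}$ extends to a regular unit on~$\Xi$. Let $\tau_1,\tau_2$ be the two $(n-1)$-simplices of $K$ containing~$\sigma$, with remaining vertices $u$ and~$w$, and let $\bn_i,\bm_i$ be the inner and outer unit normals figuring in the definition of the oriented angle. One checks that $\cos\varphi_\sigma$ and $\sin\varphi_\sigma$ are, up to a definite sign determined by the orientations of $K$ and~$\X^n$, inner products of pairs among $\bn_1,\bn_2,\bm_1,\bm_2$; and each of $\bn_i,\bm_i$ is a rational function of the vertex coordinates whose denominator is a Gram determinant of one of $\sigma,\tau_1,\tau_2$, hence a constant depending only on~$\bell$. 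Therefore $\cos\varphi_\sigma$ and $\sin\varphi_\sigma$ are restrictions to $\Xi$ of polynomials in the coordinates, and $E_\sigma:=\cos\varphi_\sigma+i\sin\varphi_\sigma\in\C[\Xi]$ satisfies $E_\sigma\cdot(\cos\varphi_\sigma-i\sin\varphi_\sigma)=1$ on~$S$, hence on all of~$\Xi$ by irreducibility; so $E_\sigma\in\C[\Xi]^{*}$ with $E_\sigma|_S=e^{i\varphi_\sigma}$. Since $\Xi$ is irreducible, $\C[\Xi]^{*}/\C^{*}$ is torsion-free, so the subgroup $\Gamma$ generated by the classes $[E_\sigma]$ is free abelian of some finite rank~$\rho$; I would fix a basis $g_1,\dots,g_\rho$, write $[E_\sigma]=\sum_{k}c_{\sigma k}g_k$ with $c_{\sigma k}\in\Z$, and put $L=\{a\in\Z^{\fF_{n-2}(K)}:\sum_\sigma a_\sigma[E_\sigma]=0\}$.

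Then two relations appear. \emph{(i)} For $a\in L$ the function $\prod_\sigma E_\sigma^{a_\sigma}$ is a constant on~$\Xi$; evaluating at points $P\in\overline S$, where $E_\sigma(P)=e^{i\varphi_\sigma(P)}$ because $P\in\Omega$, shows that $\sum_\sigma a_\sigma\varphi_\sigma(P)\in\R/2\pi\Z$ does not depend on~$P$. \emph{(ii)} By Schl\"afli's formula on the manifold~$S$, the $1$-form $\sum_\sigma v_\sigma\,d\varphi_\sigma$ equals $0$ if $\X^n=\E^n$ and $\varepsilon(n-1)\,d\CV$ otherwise; in the non-Euclidean case the hypothesis that $\CV$ is constant on $\Sigma_0$ makes $d\CV$ vanish on~$S$, so $\sum_\sigma v_\sigma\,d\varphi_\sigma=0$ on~$S$ in every case. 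On~$S$ one has $d\varphi_\sigma=\tfrac1i\,dE_\sigma/E_\sigma$, so the regular $1$-form $\sum_\sigma v_\sigma\,dE_\sigma/E_\sigma$ on~$\Xi$ vanishes on the Zariski-dense totally real submanifold~$S$ and therefore vanishes identically on $\Xi^{\reg}$. Substituting $dE_\sigma/E_\sigma=\sum_k c_{\sigma k}\,dg_k/g_k$ yields $\sum_k\bigl(\sum_\sigma v_\sigma c_{\sigma k}\bigr)dg_k/g_k=0$, and since the logarithmic differentials of the multiplicatively independent units $g_1,\dots,g_\rho$ are linearly independent over~$\C$ (for instance, the orders of vanishing of the $g_k$ along the components of the boundary divisor of a projective compactification of~$\Xi$ form $\Z$-linearly independent integer vectors), I conclude $\sum_\sigma v_\sigma c_{\sigma k}=0$ for every~$k$.

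To finish, take $P_0,P_1\in\overline S$ and write $\D(P_1)-\D(P_0)=\sum_\sigma v_\sigma\otimes\overline{\varphi_\sigma(P_1)-\varphi_\sigma(P_0)}$, the bar denoting reduction $\R/2\pi\Z\to\R/\pi\Z$; picking real numbers $t_\sigma$ lifting $\varphi_\sigma(P_1)-\varphi_\sigma(P_0)\in\R/2\pi\Z$, this equals the image of $\sum_\sigma v_\sigma\otimes t_\sigma$ in $\R\otimes(\R/\pi\Z)$. Working inside the finite-dimensional $\Q$-subspace $U\subseteq\R$ spanned by $\pi$ together with all $v_\sigma$ and $t_\sigma$, it suffices that $\sum_\sigma v_\sigma\otimes\bar t_\sigma=0$ in $U\otimes_\Q(U/\Q\pi)$. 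By~(i), $\sum_\sigma a_\sigma t_\sigma\in2\pi\Z$ for every $a\in L$, so the vector $(\bar t_\sigma)$ is annihilated by~$L$; since $L\otimes\Q$ is precisely the kernel of $a\mapsto(\sum_\sigma a_\sigma c_{\sigma k})_k$, this forces $\bar t_\sigma=\sum_k c_{\sigma k}s_k$ for some $s_k\in U/\Q\pi$, and then
\[
\sum_\sigma v_\sigma\otimes\bar t_\sigma=\sum_k\Bigl(\sum_\sigma c_{\sigma k}v_\sigma\Bigr)\otimes s_k=0
\]
by~(ii). Hence $\D(P_0)=\D(P_1)$, and propagating along the connected poset of strata gives that $\D$ is constant on $\Sigma_0$. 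The hard part of this programme is the middle step: establishing that $\cos\varphi_\sigma$ and $\sin\varphi_\sigma$ are genuinely polynomials with coefficients depending only on~$\bell$ — which requires an orientation-sensitive determination of the sign of $\sin\varphi_\sigma$ so that $e^{i\varphi_\sigma}$ becomes a single-valued regular unit — together with the $\C$-linear independence of the logarithmic differentials $dg_k/g_k$, which one might alternatively obtain via a Liouville-type argument as in Lemma~\ref{lem_log}.
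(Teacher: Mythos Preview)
Your programme is sound and reaches the goal, but it follows a genuinely different path from the paper's proof.

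Both arguments rest on the same two inputs: Schl\"afli's formula (giving $\sum_\sigma v_\sigma\,d\varphi_\sigma=0$ on each stratum~$S$) and Lemma~\ref{lem_polyn} (giving that $e^{i\varphi_\sigma}$ is the restriction of a polynomial~$Q_\sigma$). The divergence is in how the transcendental information is extracted. The paper never introduces the unit group or a basis $g_1,\dots,g_\rho$. Instead, it works with monodromy: for each loop~$\gamma$ in the Zariski open set $\Upsilon\subseteq\Xi^{\reg}$ where no~$Q_\sigma$ vanishes, the analytic continuation of a real branch~$\hphi_\sigma$ picks up~$2\pi k_\sigma(\gamma)$, where $k_\sigma(\gamma)$ is the winding number of~$Q_\sigma\circ\gamma$. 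Schl\"afli plus uniqueness of analytic continuation (Lemma~\ref{lem_strat_contin}) give $\sum_\sigma v_\sigma k_\sigma(\gamma)=0$ for every~$\gamma$; applying an arbitrary $\Q$-linear $f\colon\R\to\Q$ then yields $\sum_\sigma f(v_\sigma)k_\sigma(\gamma)=0$, so the function $\psi_f=\sum_\sigma f(v_\sigma)\hphi_\sigma$ extends to a \emph{single-valued} holomorphic function on~$\Upsilon$. The Liouville-type Lemma~\ref{lem_log} (applied on a principal affine open $\Xi_h\subseteq\Upsilon$, with the bound $|\Im\psi_f|\le\max\ln|Q_\sigma^{\pm L}|$) forces $\psi_f$ to be constant. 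Since this holds for every~$f$, the map $P\mapsto\sum_\sigma v_\sigma\otimes\hphi_\sigma(P)\in\R\otimes\R$ is locally constant, hence so is~$\D$. The passage between strata is handled by a countability argument rather than by your poset-of-strata connectivity.

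Your route replaces monodromy-plus-Liouville by the structure of $\C[\Xi]^{*}/\C^{*}$: you observe (correctly, via $Q_\sigma\bar Q_\sigma=1$ on~$S$) that each~$E_\sigma$ is a global unit, decompose the $[E_\sigma]$ in a free basis, and translate Schl\"afli into the vanishing of the holomorphic $1$-form $\sum_\sigma v_\sigma\,dE_\sigma/E_\sigma$ on~$\Xi^{\reg}$, whence $\sum_\sigma v_\sigma c_{\sigma k}=0$ for all~$k$. This is elegant and arguably more conceptual, but it shifts the analytic burden to the claim that the logarithmic differentials $dg_1/g_1,\dots,dg_\rho/g_\rho$ are $\C$-linearly independent. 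Your residue sketch (orders along boundary divisors of a compactification) is correct in outline but tacitly invokes resolution of singularities (since $\Xi$ need not be smooth) and the residue calculus for log forms; this is heavier machinery than the paper uses. The alternative you mention --- deducing the independence from Lemma~\ref{lem_log} --- essentially reproduces the paper's argument, since Lemma~\ref{lem_log} is exactly where the paper's proof does its real work. In short: your algebraic packaging is different and valid, but the one step you flag as ``hard'' is precisely the step the paper handles directly with Lemma~\ref{lem_log}, and a fully self-contained version of your proof would most naturally fall back on that lemma.
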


Below we always assume that the $5$-tuple $(\X^n,K,\bell,\Sigma,\Sigma_0)$ is fixed and satisfies the hypothesis of Theorem~\ref{thm_precise}. 

For each $\sigma\in\fF_{n-2}$, the volume $\vol_{n-2}(P(\sigma))$ depends only on the edge lengths of the polyhedron~$P$, hence, is constant on~$\Sigma$. We denote this volume by~$V_{\sigma}$.

We need the following lemma, which is proved in~\cite[Lemma~9.2]{Gai15b} in the case~$\X^n=\bL^n$. The proofs in the cases $\X^n=\E^n$ and~$\X^n=\bS^n$ are quite similar, so we omit them.

\begin{lem}\label{lem_polyn}
For each $\sigma\in\fF_{n-2}(K)$, there exists a polynomial~$Q_{\sigma}$ in variables~$x_{v,j}$ such that the restriction of the function $\exp(i\varphi_{\sigma})$ to~$\Sigma$ coincides with the restriction of~$Q_{\sigma}$ to~$\Sigma$.
\end{lem}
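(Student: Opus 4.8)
The plan is to extract $\exp(i\varphi_\sigma)$ algebraically from the coordinates of the vertices using the defining data of the oriented dihedral angle. Recall that $\varphi_\sigma$ is the angle from $\bn_1$ to $\bn_2$ measured in the direction from $\bm_1$ to $\bn_1$, where $\tau_1,\tau_2$ are the two $(n-1)$-simplices of $K$ containing $\sigma$, the vectors $\bn_i$ are the inward unit normals to $P(\sigma)$ inside $P(\tau_i)$, and the $\bm_i$ are the outer normals to $P(\tau_i)$. The key point is that $\exp(i\varphi_\sigma)=\cos\varphi_\sigma+i\sin\varphi_\sigma$, and both $\cos\varphi_\sigma=\langle\bn_1,\bn_2\rangle$ and $\sin\varphi_\sigma=\langle\bm_1,\bn_2\rangle$ (with the correct sign coming from the chosen orientation of rotation) are scalar products of the vectors $\bn_1,\bn_2,\bm_1,\bm_2$. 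So it suffices to express each of these four vectors, or at least the relevant scalar products, as \emph{rational} functions of the $x_{v,j}$ whose denominators are constant on $\Sigma$; then, after multiplying through by the (locally constant, hence on $\Sigma$ constant) normalizing factors, one obtains a polynomial identity valid on $\Sigma$.

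First I would fix $\sigma=[v_2\ldots v_{n-1}]$, say, with $\tau_1=[v_0v_2\ldots v_{n-1}]$ and $\tau_2=[v_1v_2\ldots v_{n-1}]$, and write everything at the vertex $\bx=\bx_{v_2}$ (the choice of point in $P(\sigma)$ being irrelevant). In the Euclidean case, the inward normal $\bn_1$ in $P(\tau_1)$ to $P(\sigma)$ is obtained by Gram--Schmidt from $\bx_{v_0}-\bx_{v_2},\bx_{v_3}-\bx_{v_2},\ldots,\bx_{v_{n-1}}-\bx_{v_2}$: it is a linear combination of these vectors with coefficients that are rational functions of the Gram matrix entries $\langle\bx_{v_i}-\bx_{v_2},\bx_{v_j}-\bx_{v_2}\rangle$, and these entries are themselves polynomials in the $x_{v,j}$ that are moreover \emph{constant} on $\Sigma$ (they depend only on the edge lengths, since $\sigma\cup\{v_0\}$ spans a face). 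Hence the denominators appearing are constant on $\Sigma$, and after clearing them $\bn_1$ becomes polynomial in the $x_{v,j}$ \emph{on $\Sigma$}; similarly for $\bn_2$. The outer normal $\bm_1$ to $P(\tau_1)$ is, up to a scalar, the cross-product (or Hodge-dual) of the $n-1$ edge vectors spanning $P(\tau_1)$, normalized by $(n-1)!\,\vol_{n-1}(P(\tau_1))$, which again is constant on $\Sigma$; the sign is pinned down by the orientation convention and does not vary on the connected set $\Sigma$ (indeed not even on $\Omega$, where $\varphi_\sigma$ is continuous). In the spherical and hyperbolic cases one argues identically in the ambient vector model $\R^{n+1}$ or $\R^{1,n}$, replacing the Euclidean inner product by $\langle\cdot,\cdot\rangle$ and using that $\langle\bx_{v_i},\bx_{v_j}\rangle$ equals $1$, $\cos\ell_{[v_iv_j]}$ or $\cosh\ell_{[v_iv_j]}$ on $\Sigma$ — again constant — so that the tangent-space projections and Gram--Schmidt coefficients have denominators constant on $\Sigma$.

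Assembling the pieces: $\exp(i\varphi_\sigma)=\langle\bn_1,\bn_2\rangle + i\,\langle\bm_1,\bn_2\rangle$ is then a rational function of the $x_{v,j}$ whose denominator is a product of factors each constant and nonzero on $\Sigma$ (normalizing volumes of non-degenerate faces, and Gram determinants of sub-configurations of a single face, all positive by non-degeneracy of $\bell$). Multiplying by that denominator and absorbing its constant value gives the desired polynomial $Q_\sigma$ with $\exp(i\varphi_\sigma)|_\Sigma=Q_\sigma|_\Sigma$. The main technical obstacle is purely bookkeeping: carefully writing the Gram--Schmidt formulas for $\bn_1,\bn_2$ and the wedge-product formula for $\bm_1,\bm_2$, and verifying that every denominator that arises involves only scalar products among vertices of a \emph{single} simplex (hence is determined by $\bell$ and constant on $\Sigma$), rather than a scalar product between vertices of $\tau_1$ and $\tau_2$ not both in $\sigma$ — which would vary. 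Since this is done in detail in~\cite[Lemma~9.2]{Gai15b} for $\X^n=\bL^n$ and the Euclidean and spherical computations differ only in the form of the scalar product, I would simply carry out the hyperbolic case verbatim and remark that the other two are identical.
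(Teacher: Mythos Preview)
Your proposal is correct and follows exactly the route the paper takes: the paper does not give a proof either, but simply cites \cite[Lemma~9.2]{Gai15b} for the case $\X^n=\bL^n$ and remarks that the Euclidean and spherical cases are analogous, which is precisely what you conclude. Your sketch of the underlying computation (expressing $\bn_1,\bn_2$ via Gram--Schmidt and $\bm_1$ via a generalized cross product, and observing that every normalizing denominator depends only on scalar products of vertices within a single face, hence is constant on~$\Sigma$) is accurate and is in fact the content of the cited lemma; the minor index slip in writing $\sigma=[v_2\ldots v_{n-1}]$ (which has one vertex too few) is cosmetic and does not affect the argument.
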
 

\begin{remark}
If $\X^n$ is either $\E^n$ or~$\bS^n$, this lemma claims that the restriction of the function $\exp(i\varphi_{\sigma})$ to $\Sigma$ is a complex-valued regular function on~$\Sigma$, i.\,e., belongs to~$\C[\Sigma]$. However, if $\X^n=\bL^n$, then we cannot formulate Lemma~\ref{lem_polyn} in this way, since $\Sigma$ is not an algebraic subset of~$\R^N$ but only the union of several connected components of an algebraic subset.
\end{remark}

The polynomials~$Q_{\sigma}$ are by no means unique. We choose some polynomials satisfying the required conditions, and fix this choice.

\begin{propos}\label{propos_main}
Let $U$ be a connected open subset of~$\Sigma_0$ such that every function $\varphi_{\sigma}\colon\Sigma_0\to\R/2\pi\Z$ has a continuous $\R$-valued branch~$\hphi_{\sigma}$ on~$U$. Let $f\colon\R\to\Q$ be an arbitrary $\Q$-linear mapping. Then the $\R$-valued function
\begin{equation}\label{eq_psif}
\psi_f=\sum_{\sigma\in \mathbf{F}_{n-2}(K)}f(V_{\sigma})\,\hphi_{\sigma}
\end{equation}
is constant on~$U$.
\end{propos}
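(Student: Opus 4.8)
<br>

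The key idea is to exploit the rigidity of holomorphic functions via Lemma~\ref{lem_log}, combined with Schläfli's formula and the analytic continuation machinery from Lemmas~\ref{lem_strat} and~\ref{lem_strat_contin}. First I would observe that, by Schläfli's formula \eqref{eq_schlaefli} together with our standing assumption (either $\X^n=\E^n$, or $\CV$ is constant on $\Sigma_0$), we have $\sum_{\sigma}V_{\sigma}\,d\hphi_{\sigma}=0$ on $U$. Applying the $\Q$-linear map $f$ to the coefficients $V_{\sigma}$ does not obviously commute with the differential, but the point is more subtle: $\psi_f$ has \emph{locally constant differential} only if the $V_\sigma$ are rationally dependent. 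Instead, the correct route is to show $\psi_f$ is globally constant on a large complex variety by a Liouville-type argument, and the values $f(V_\sigma)$ being \emph{rational} is exactly what makes the relevant exponential $\exp(i\psi_f/M)$ (for a suitable common denominator $M$) into a single-valued algebraic function.

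\textbf{Main steps.} (1) Let $S$ be the stratum of the standard stratification of $\Sigma\cup(-\Sigma)$ (an algebraic set, by the discussion after \eqref{eq_system_L3}) whose closure contains $\Sigma_0$, i.e.\ the top-dimensional stratum meeting $\Sigma_0$; let $\Xi\subset\C^N$ be its Zariski closure, which by Lemma~\ref{lem_strat} is an irreducible complex affine variety with $\dim_\C\Xi=\dim_\R S$ and $S\subseteq\Xi^{\reg}$. Passing to $\Xi^{\reg}$ we may assume we work on a smooth irreducible complex affine variety; if necessary, normalize or restrict to the smooth locus so that Lemma~\ref{lem_log} applies. (2) Choose a positive integer $M$ such that $f(V_\sigma)\in\frac1M\Z$ for every $\sigma\in\fF_{n-2}(K)$ (possible since there are finitely many faces and $f$ is $\Q$-valued). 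Set $g_\sigma=f(V_\sigma)\cdot M\in\Z$. On $U$ consider the holomorphic function $h=\exp\!\bigl(\frac{i}{M}\psi_f\bigr)=\prod_\sigma\exp\!\bigl(\tfrac{i g_\sigma}{M^2}\hphi_\sigma\bigr)$; more usefully, consider $H=\exp(i\psi_f)=\prod_\sigma\exp(i g_\sigma\hphi_\sigma)=\prod_\sigma\bigl(\exp(i\hphi_\sigma)\bigr)^{g_\sigma}=\prod_\sigma Q_\sigma(x)^{g_\sigma}$ on $U$, using Lemma~\ref{lem_polyn} and the fact that the $g_\sigma$ are \emph{integers}. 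Thus $H$ extends to a single-valued rational function on $\Xi$, regular wherever the $Q_\sigma$ with negative $g_\sigma$ do not vanish. (3) Now analytically continue the real-analytic function $\psi_f$ from $U$ along paths in $\Xi^{\reg}$; by Lemma~\ref{lem_strat_contin} such continuation is unique whenever it exists, and by step (2) the continuation of $\exp(i\psi_f)$ always exists and equals $\prod Q_\sigma^{g_\sigma}$. Hence $\psi_f$ continues to a (possibly multivalued) holomorphic function on the complement in $\Xi^{\reg}$ of the zero/polar divisor of $\prod Q_\sigma^{g_\sigma}$, and passing to a suitable covering / removing a proper subvariety we obtain a genuine $\psi\in\CO(X)$ on a smooth irreducible complex affine variety $X$ dominating (an open part of) $\Xi$. (4) Estimate $\Im\psi$: since $\exp(i\psi)=\prod_\sigma Q_\sigma^{g_\sigma}$, we get $\Im\psi=\log\bigl|\prod_\sigma Q_\sigma^{g_\sigma}\bigr|^{-1}=-\sum_\sigma g_\sigma\log|Q_\sigma|$, hence $\Im\psi$ is bounded above by a finite sum of terms $\pm g_\sigma\log|Q_\sigma|$, which is certainly $\le\max_n\ln|G_n|$ for a finite list of regular functions $G_n\in\C[X]$ (e.g.\ take the $G_n$ to be the various monomials $\prod_{\sigma}Q_\sigma^{\pm g_\sigma}$ and their products as needed to dominate a sum by a max). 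Lemma~\ref{lem_log} then forces $\psi$ to be constant, hence $\psi_f$ is constant on $U$.

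\textbf{Where the work is.} The conceptual heart is step (2)–(3): the reason the statement is true for \emph{rational} $f$ (and would fail for general $\R$-linear $f$) is precisely that integrality of $g_\sigma$ turns $\exp(i\psi_f)$ into an honest algebraic function $\prod Q_\sigma^{g_\sigma}$, which is what lets us carry $\psi_f$ along \emph{all} of $\Xi^{\reg}$ and not just where $\psi_f$ happens to converge. The main technical obstacle is managing the branching and the polar/zero locus of $\prod Q_\sigma^{g_\sigma}$: one must pass to an appropriate finite covering of a Zariski-open subset of $\Xi^{\reg}$ on which $\psi_f$ becomes single-valued holomorphic, while keeping the ambient variety smooth, irreducible, and affine so that Lemma~\ref{lem_log} applies verbatim. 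A secondary point to handle carefully is the Liouville estimate itself: one has a \emph{sum} $\sum_\sigma(-g_\sigma)\log|Q_\sigma|$ rather than a single log, so one should first bound this sum by $C\cdot\max_\sigma\log|Q_\sigma(x)|+C'$ for suitable constants and then absorb the constants into the $g_n$'s by raising to powers — routine, but it must be spelled out to match the exact hypothesis of Lemma~\ref{lem_log}. Finally one checks that $U$ being connected and $\psi_f$ real-analytic on it means global constancy on $U$ follows from constancy of the continuation $\psi$ on $X$, since $U$ maps into $X$ and $\psi|_U=\psi_f$.
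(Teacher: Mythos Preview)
Your proposal identifies the right tools (Lemma~\ref{lem_polyn}, Lemma~\ref{lem_log}, analytic continuation to the complex variety~$\Xi$), but the central step~(3) has a genuine gap: you never explain why the analytic continuation of~$\psi_f$ is \emph{single-valued}. Knowing that $\exp(iM\psi_f)=\prod_\sigma Q_\sigma^{g_\sigma}$ is a well-defined rational function only tells you that the monodromy of $M\psi_f$ along a loop~$\gamma$ lies in~$2\pi\Z$; it does \emph{not} tell you it is zero. Your phrase ``passing to a suitable covering'' cannot fix this: the monodromy group is a subgroup of~$\Z$, hence either trivial or infinite, and on an infinite (e.g.\ universal) cover Lemma~\ref{lem_log} no longer applies, since that lemma requires a smooth irreducible complex \emph{affine} variety.

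What is missing is precisely the use of Schl\"afli's formula that you mention and then set aside. The paper's argument runs as follows: on each stratum~$W$, Schl\"afli plus the standing hypothesis give $\sum_\sigma V_\sigma\,d\hphi_\sigma=0$, so $\sum_\sigma V_\sigma\hphi_\sigma$ is constant on~$W$; by uniqueness of analytic continuation (Lemma~\ref{lem_strat_contin}) this constancy persists along every loop~$\gamma$ in the complex locus~$\Upsilon$, forcing $\sum_\sigma V_\sigma\,k_\sigma(\gamma)=0$ where $k_\sigma(\gamma)$ is the winding number of~$Q_\sigma\circ\gamma$. Now, and only now, does the $\Q$-linearity of~$f$ enter: since the $k_\sigma(\gamma)$ are integers, applying~$f$ gives $\sum_\sigma f(V_\sigma)\,k_\sigma(\gamma)=0$, which is exactly the statement that $\psi_f$ has trivial monodromy on~$\Upsilon$. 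Then $\psi_f\in\CO(\Upsilon)$ and Lemma~\ref{lem_log} applies directly, with no covering needed. (There is also a minor slip in your step~(2): $\exp(i\psi_f)\ne\prod_\sigma Q_\sigma^{g_\sigma}$; rather $\exp(iM\psi_f)=\prod_\sigma Q_\sigma^{g_\sigma}$.) Finally, your step~(1) treats only the top-dimensional stratum, whereas $U$ may meet strata of several dimensions; the paper handles this by proving constancy on each connected component of each $U\cap S$ separately and then invoking a countability-plus-connectedness argument to conclude constancy on all of~$U$.
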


\begin{proof}
Let $S$ be a stratum of the standard stratification of~$\Sigma_0$, and let $W$ be a connected component of~$U\cap S$. 

First, let us prove that the function~$\psi_f$ is constant on~$W$.

Let $\Xi$ be the Zariski closure of~$S$ in~$\C^N$. By Lemma~\ref{lem_strat}, $\Xi$ is an irreducible complex affine variety and $S\subseteq\Xi^{\reg}$. Let $\Upsilon\subseteq\Xi^{\reg}$ be the Zariski open subset consisting of all points $P$ such that $Q_{\sigma}(P)\ne 0$ for all~$\sigma\in\fF_{n-2}(K)$. The set~$\Upsilon$ contains~$W$, hence, is non-empty. Thus $\Upsilon$ is a connected complex analytic manifold. 

Let $\gamma$ be an oriented loop in~$\Upsilon$. Then $Q_{\sigma}(\gamma)$ is an oriented loop in~$\C$ that does not pass through~$0$. We denote by~$k_{\sigma}(\gamma)$  the winding number of the loop~$Q_{\sigma}(\gamma)$ around~$0$.

Each function~$\hphi_{\sigma}$ is a branch of the multi-valued analytic function $-i\Ln Q_{\sigma}$ on the set~$W$. Since $Q_{\sigma}$ is regular and does not take zero value in~$\Upsilon$, we see that the function~$\hphi_{\sigma}$ admits an analytic continuation~$\hphi_{\sigma}^{(\gamma)}$ along any path~$\gamma$ in~$\Upsilon$ starting in~$W$, and this analytic continuation is a branch of the multi-valued analytic function $-i\Ln Q_{\sigma}$.  If $\gamma$ is an oriented loop in~$\Upsilon$ with base point in~$W$, then continuing analytically the function~$\hphi_{\sigma}$ along~$\gamma$, we arrive to another branch of the multi-valued analytic function $-i\Ln Q_{\sigma}$ on~$W$, namely, to the branch $\hphi_{\sigma}+2\pi k_{\sigma}(\gamma)$.

It follows from Schl\"afli's formula~\eqref{eq_schlaefli} and the hypothesis of Theorem~\ref{thm_precise} that 
$$
\left.\sum_{\sigma\in \fF_{n-2}(K)}V_{\sigma}\,d\hphi_{\sigma}\right|_{TW}=0,
$$
where $TW$ is the tangent bundle of~$W$.
Therefore, the function
$
\sum_{\sigma\in \fF_{n-2}(K)}V_{\sigma}\,\hphi_{\sigma}
$
is constant on~$W$. By Lemma~\ref{lem_strat_contin}, the analytic continuation of this function along a loop~$\gamma$ in~$\Upsilon$ is unique. Hence, the function $
\sum_{\sigma\in \fF_{n-2}(K)}V_{\sigma}\,\hphi_{\sigma}^{(\gamma)}$ is constant along~$\gamma$. Therefore,
\begin{equation*}
\sum_{\sigma\in \fF_{n-2}(K)}V_{\sigma}k_{\sigma}(\gamma)=0
\end{equation*}
for all loops~$\gamma$ in~$\Upsilon$. Since $k_{\sigma}(\gamma)\in\Z$ and the mapping~$f$ is $\Q$-linear, we obtain that
\begin{equation*}
\sum_{\sigma\in \fF_{n-2}(K)}f(V_{\sigma})k_{\sigma}(\gamma)=0.
\end{equation*}
It follows that  continuing analytically the function~$\psi_f$ along every loop in~$\Upsilon$ with base point in~$W$, we arrive to the function~$\psi_f$ again. In other words, the function~$\psi_f$ can be continued to a single-valued holomorphic function on~$\Upsilon$, which we again denote by~$\psi_f$.

Since $\psi_f$ is a branch of the multi-valued function $$-i\sum_{\sigma\in \fF_{n-2}(K)}f(V_{\sigma})\Ln Q_{\sigma}\,,$$ for all $P\in\Upsilon$
we have the following estimate 
\begin{multline*}
|\Im\psi_f(P)|\le\sum_{\sigma\in \fF_{n-2}(K)}|f(V_{\sigma})|\,\bigl|\ln|Q_{\sigma}( P)|\bigr|
\le \max_{\sigma\in \fF_{n-2}(K)}\left(M|f(V_{\sigma})|\,\bigl|\ln|Q_{\sigma}( P)|\bigr|\right)\\
{}\le\max\left(\max_{\sigma\in \fF_{n-2}(K)}\ln\left|Q_{\sigma}( P)^L\right|,\max_{\sigma\in \fF_{n-2}(K)}\ln\left|Q_{\sigma}( P)^{-L}\right|\right),
\end{multline*}
where $M$ is the number of $(n-2)$-dimensional simplices of~$K$, and $L$ is a positive integer that is greater than all numbers $M|f(V_{\sigma})|$, where $\sigma\in\fF_{n-2}(K)$.

Take a principal Zariski open subset $\Xi_h\subseteq\Xi$ such that $\Xi_h\subseteq\Upsilon$. Then  $\Xi_h$ is a smooth irreducible affine variety, and both $Q_{\sigma}$ and $Q_{\sigma}^{-1}$ are regular functions on~$\Xi_h$. Applying Lemma~\ref{lem_log} to the restriction of~$\psi_f$ to~$\Xi_h$, we obtain that~$\psi_f$ is  constant on~$\Xi_h$, hence, it is constant on~$\Upsilon$. Thus, $\psi_f$ is constant on~$W$.

Second, let us prove that the function~$\psi_f$ is constant on the whole set~$U$. The standard stratification of~$\Sigma$ consists of finitely many strata~$S$, and every intersection~$U\cap S$ is open in~$S$, hence, consists of at most countably many connected components. Since $\psi_f$ is constant on every connected component of every~$U\cap S$, it follows that the function~$\psi_f$ takes at most countably many values. Since $U$ is connected and $\psi_f$ is continuous, we obtain that $\psi_f$ is constant on~$U$.
\end{proof}

\begin{proof}[Proof of Theorem~\ref{thm_precise}]
Since the set~$\Sigma_0$ is connected, we suffice to prove that every point $P_0\in\Sigma_0$ has a neighborhood~$U$ in~$\Sigma_0$ such that the function $\Delta(P)$ is constant on~$U$. Obviously, every point $P_0\in\Sigma_0$ has a neighborhood~$U$ in~$\Sigma_0$ such that every function $\varphi_{\sigma}\colon\Sigma_0\to\R/2\pi\Z$ has a continuous $\R$-valued branch~$\hphi_{\sigma}$ on~$U$. By Proposition~\ref{propos_main}, for each $\Q$-linear mapping~$f\colon\R\to\Q$, the function~$\psi_f$ given by~\eqref{eq_psif} is constant on~$U$. This implies that the mapping $\hD\colon U\to\R\otimes\R$ given by $P\mapsto V_{\sigma}\otimes\hphi_{\sigma}(P)$ is constant. Hence the composite mapping 
$$
\Delta\colon U\xrightarrow{\ \hD\ }\R\otimes\R\xrightarrow{\mathrm{id}\otimes\mathrm{pr}}\R\otimes(\R/\pi\Z),
$$
where $\mathrm{pr}\colon\R\to\R/\pi\Z$ is a projection, is also constant on~$U$.
\end{proof}

\section{On a counterexample by Alexandrov and Connelly}\label{section_example}

A \textit{suspension with a hexagonal equator} is a polyhedron in~$\R^3$ of the combinatorial type of a hexagonal bipyramid, i.\,e., a polyhedron with $8$ vertices $N$ (north pole), $S$ (south pole), $p_1,\ldots,p_6$ and $16$ faces $Np_jp_{j+1}$ and $Sp_jp_{j+1}$, $j=1,\ldots,6$. Here and further the sums of indices are always taken modulo~$6$, i.\,e., $j+1$ should be replaced by~$1$ whenever $j=6$. 

The following construction of a flexible suspension with a hexagonal equator is a partial case of the description of all flexible suspensions obtained by Connelly~\cite{Con74}.
\pagebreak

The initial data for this construction are a real non-degenerate cubic~$E$ given by  $$y^2=x(x-b')(x-b),$$ where $0<b'<b$, and $24$ points on it that are denoted by $Q_{j,\pm}$ and $Q_{j,\pm}'$, $j=1,\ldots,6$, and should satisfy the following conditions:
\begin{enumerate}
\item[(A)] The points~$Q_{j,+}$ and~$Q_{j,-}$ either coincide or are symmetric to each other about the $x$-axis, and the points~$Q_{j,+}'$ and~$Q_{j,-}'$ either coincide or are symmetric  to each other about the $x$-axis. 
\item[(B)] $Q_{j,-}+ Q_{j,-}' + Q_{j+1,+} + Q'_{j+1,+}=0$, where $+$ denotes the addition of points on the elliptic curve~$E$ and the infinite point of~$E$ is taken for the neutral element~$0$. Besides, no pair of the four points $Q_{j,-}$, $Q_{j,-}'$,  $Q_{j+1,+}$, and~$Q'_{j+1,+}$ is symmetric about the $x$-axis.
\item[(C)] The total collection of the $24$ points $Q_{j,\pm}$ and~$Q'_{j,\pm}$ (counting multiplicities) is symmetric about the $x$-axis.
\item[(D)] The points $Q'_{j,\pm}$ are on the compact component of~$E$ and the points~$Q_{j,\pm}$ are on the non-compact component of~$E$.
\end{enumerate}

It is well-known that condition~(B) is equivalent to the following: The four points $Q_{j,-}$, $Q_{j,-}'$, $Q_{j+1,+}$, and~$Q'_{j+1,+}$ lie on a parabola $y=q_j(x)$, where $q_j(x)=a_jx^2+b_jx+c_j$ is a quadratic polynomial with real coefficients and $a_j\ne 0$.
Denote by~$r_j$ the $x$-coordinate of the points~$Q_{j,\pm}$ and by~$r_j'$ the $x$-coordinate of the points~$Q_{j,\pm}'$; then $r_j\ge b$ and~$0\le r_j'\le b'$.
Since $r_j$, $r_j'$, $r_{j+1}$, and~$r_{j+1}'$ are the four roots of the polynomial $q_j(x)^2-x(x-b')(x-b)$, Vieta's formula implies that 
\begin{equation}\label{eq_Vieta}
\frac{c_j}{a_j}=s_j\sqrt{r_jr_j'r_{j+1}r_{j+1}'}\,,
\end{equation}
where $s_j=\pm 1$.

Consider the complex-valued functions defined on the interval $(b',b)$ given by
\begin{equation}\label{eq_Fj}
F_j(x)=\frac{-q_j(x)+i\sqrt{-x(x-b)(x-b')}}
{a_j\sqrt{(x-r_j)(x-r_j')(x-r_{j+1})(x-r_{j+1}')}}
\end{equation}
where the positive values of the square roots are taken.

Since the points $Q_{j,-}$, $Q_{j,-}'$, $Q_{j+1,+}$, and~$Q'_{j+1,+}$ lie on the parabola $y=q_j(x)$, it follows that $|F_j(x)|=1$ for all $x\in (b',b)$. 
Connelly showed that condition~(C) implies that 
\begin{equation}\label{eq_circ}
\prod_{j=1}^6F_j(x)=1.
\end{equation}
In particular, it follows that $s_1s_2\cdots s_6=1$. Hence, we can choose signs~$\sigma_j=\pm 1$ so that $\sigma_j\sigma_{j+1}=s_j$ for all~$j$. The numbers~$\sigma_j$ are well defined up to simultaneous change of signs  of  all of them, which geometrically corresponds to the interchange of the poles~$N$ and~$S$.
 
To write explicit formulae for the flexion, we shall conveniently identify $\R^3$ with $\C\times \R$. Now, the flexion of a suspension with a hexagonal equator is given by
 \begin{gather*}
S(x)=(0,0),\qquad N(x)=\bigl(0,\sqrt{x}\bigr),\\
p_k(x)=\frac{1}{2\sqrt{x}}\left(
\sqrt{-(x-r_k)(x-r_k')}\cdot\prod_{j=1}^{k-1}F_j(x),\ x+\sigma_k\sqrt{r_kr_k'}
\right),
\end{gather*}
where $x$ runs over the interval~$(b',b)$.

 Using these formulae and formulae~\eqref{eq_Vieta},~\eqref{eq_Fj}, and~\eqref{eq_circ}, it can be checked immediately that the lengths of the edges of the suspension are constant and are given by 
\begin{gather}\label{eq_edge1}
|p_k-S|=\frac{\sqrt{r_k}+\sigma_k\sqrt{r_k'}}{2}\qquad\qquad |p_k-N|=\frac{\sqrt{r_k}-\sigma_k\sqrt{r_k'}}{2}\\
\label{eq_edge2}|p_{k+1}-p_k|=\frac{1}{2|a_k|}
\end{gather}

\begin{remark}
The geometric interpretation of the numbers $F_j$ is as follows: $F_j=\exp(i\theta_j)$, where~$\theta_j$ is the oriented dihedral angle between the halfplanes $NSp_j$ and $NSp_{j+1}$. 
\end{remark}

Alexandrov and Connelly~\cite{AlCo11} considered a particular example of such flexible suspension with a hexagonal equator and claimed that its Dehn invariant is non-constant during the flexion. Namely, they fixed $b'=51$, $b=100$, considered the points 
\begin{align*}
A&=(2,98),& B&=\left(\frac{4039540}{762129}\,,\frac{100768585960}{665338617}\right),\\ C&=(102,-102),& D&=(30,-210)
\end{align*}
on the cubic $y^2=x(x-51)(x-100)$ and the $24$ points~$Q_{j,\pm}$, $Q'_{j,\pm}$ as shown in Table~\ref{table} (cf.~\cite[Table~2]{AlCo11}). Recall that given all these data, we still have two possibilities for the signs~$\sigma_j$. The suspension considered by Alexandrov and Connelly corresponds to the choice of $\sigma_1=-1$. All other signs can be restored uniquely: $\sigma_2= \sigma_3=\sigma_4=\sigma_5=1$, $\sigma_6=-1$.

\begin{table}
\caption{The points~$Q_{j,\pm}$ and~$Q'_{j,\pm}$}\label{table}

\begin{tabular}[t]{|c|c|c|c|c|}
\firsthline
$j$ & $\vphantom{\widehat{R}}Q_{j,-}$ & $Q_{j+1,+}$ & $Q'_{j,-}$ & $\vphantom{Q_{\mathstrut}'}Q'_{j+1,+}$ \\
\hline
1 & $\vphantom{\widehat{R}}A-B+C$ & $-A+2B-C-D$ & $-B$ & $D$ \\
2 & $\vphantom{\widehat{R}}A-2B+C+D$ & $2B-C-2D$ & $D$ & $-A$ \\
3 & $\vphantom{\widehat{R}}-2B+C+2D$ & $A+B-C-2D$ & $-A$ & $B$ \\
4 & $\vphantom{\widehat{R}}-A-B+C+2D$ & $A-C-D$ & $B$ & $-D$ \\
5 & $\vphantom{\widehat{R}}-A+C+D$ & $-C$ & $-D$ & $A$ \\
6 & $\vphantom{\widehat{R}}C$ & $-A+B-C$ & $A$ & $-B$ \\
\hline
\end{tabular}

\end{table}

Further, they computed the Dehn invariant of the obtained flexible suspension
\begin{multline*}
\D(x)=1\otimes\alpha_1(x)+\sqrt{2}\otimes\alpha_2(x)+\sqrt{15}\otimes\alpha_3(x)
+\sqrt{30}\otimes\alpha_4(x)\\{}+\sqrt{85}\otimes\alpha_5(x)+\sqrt{102}\otimes\alpha_6(x)
+\sqrt{170}\otimes\alpha_7(x),
\end{multline*}
where $\alpha_j(x)$, $j=1,\ldots,7$, are certain $\Q$-linear combinations of the oriented dihedral angles of the suspension, and concluded that the Dehn invariant~$\D(x)$ is constant in~$x$ if and only if $\alpha_j(x)$ is constant in~$x$ for every $j=1,\ldots,7$.

The linear combinations~$\alpha_j(x)$ were written explicitly in~\cite[(4.1)--(4.7)]{AlCo11}; in particular, 
$$
\alpha_4(x)=-\frac12\bigl(\varphi_2(x)-\varphi'_2(x)\bigr)-\frac12\bigl(\varphi_5(x)-\varphi'_5(x)\bigr),
$$
where $\varphi_j$ and $\varphi_j'$ are the oriented dihedral angles at the edges~$Np_j$ and~$Sp_j$, respectively. Besides, Alexandrov and Connelly showed that $\varphi_j(x)=-\varphi_j'(x)$ for all~$j$, hence,
$$
\alpha_4(x)=-\varphi_2(x)-\varphi_5(x).
$$
Further, they showed that the oriented dihedral angles $\varphi_{1,2}(x)$ and~$\varphi_{4,5}(x)$ at the edges~$p_1p_2$ and~$p_4p_5$ are equal to each other for all~$x$. 
For $x=51$, the suspension is flat, i.\,e., contained in a plane, and $\varphi_2(51)=\varphi_5(51)=\pi$ and $\varphi_{1,2}(51)=\varphi_{4,5}(51)=0$, see~\cite[Table~6]{AlCo11}. When $x$ increases from~$51$, the angles~$\varphi_{1,2}(x)=\varphi_{4,5}(x)$ increase. 

\begin{figure}
\begin{center}
\unitlength=3.5mm
\begin{picture}(20,9)
\put(0,0.4){\includegraphics[scale=1]{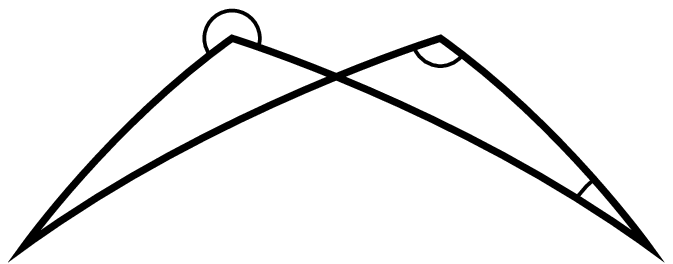}}
\put(6.3,8.4){$\varphi_2$}
\put(12.8,7.4){$\widetilde{S}$}
\put(6.5,5.3){$\widetilde{N}$}
\put(12.7,5.2){$\varphi_2'$}
\put(17.9,3.1){$\varphi_{1,2}$}
\put(-.4,0){$\widetilde{p}_3$}
\put(19.8,0){$\widetilde{p}_1$}
\end{picture}
\end{center}

\caption{Spherical quadrangle~$\mathcal{Q}_2$}\label{fig_p2}

\vspace{1.4cm}

\begin{center}
\unitlength=3.5mm
\begin{picture}(20,9)
\put(0,0.4){\includegraphics[scale=1]{fig1.eps}}
\put(6.3,8.4){$\varphi_5$}
\put(12.8,7.4){$\widetilde{S}$}
\put(6.5,5.3){$\widetilde{N}$}
\put(12.7,5.2){$\varphi_5'$}
\put(17.9,3.1){$\varphi_{4,5}$}
\put(-.4,0){$\widetilde{p}_6$}
\put(19.8,0){$\widetilde{p}_4$}
\end{picture}
\end{center}
\caption{Spherical quadrangle~$\mathcal{Q}_5$: Incorrect}\label{fig_p5}

\vspace{1.4cm}

\begin{center}
\unitlength=3.5mm
\begin{picture}(20,9)
\put(0,0.4){\includegraphics[scale=1]{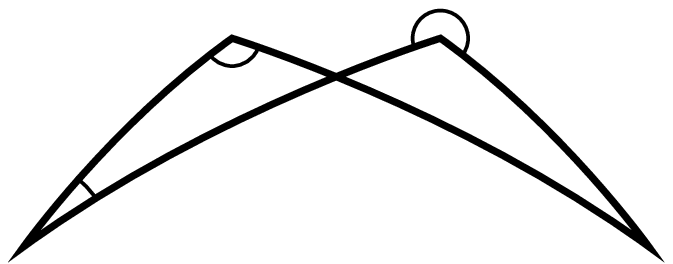}}
\put(6.5,5.3){$\varphi_5$}
\put(12.5,5.3){$\widetilde{S}$}
\put(6.6,7.4){$\widetilde{N}$}
\put(13,8.4){$\varphi_5'$}
\put(.5,3.1){$\varphi_{4,5}$}
\put(-.4,0){$\widetilde{p}_4$}
\put(19.8,0){$\widetilde{p}_6$}
\end{picture}
\end{center}
\caption{Spherical quadrangle~$\mathcal{Q}_5$: Correct}\label{fig_p5_corr}
\end{figure}

Alexandrov and Connelly claimed that both $\varphi_2(x)$ and~$\varphi_5(x)$ also increase as $x$ increases, hence, their sum cannot be constant. To show that $\varphi_2(x)$ and~$\varphi_5(x)$ increase, they studied spherical quadrangles~$\mathcal{Q}_2$ and~$\mathcal{Q}_5$ cut on the unit spheres with centres~$p_2(x)$ and~$p_5(x)$ by faces of the suspension. They showed that the opposite sides of either of  these quadrangles are equal to each other. Further, they argued that these two quadrangles look as it is shown in Figs.~\ref{fig_p2} and~\ref{fig_p5}, respectively. (These figures are exactly Figs.~14 and~15 in~\cite{AlCo11}. Here $\widetilde{N}$, $\widetilde{S}$, and~$\widetilde{p}_j$ indicate the vertices of the quadrangles corresponding to the edges leading to $N$, $S$, and~$p_j$, respectively.) If the spherical quadrangles~$\mathcal{Q}_2$ and~$\mathcal{Q}_5$ were as in Figs.~\ref{fig_p2} and~\ref{fig_p5} then the increase of the angles~$\varphi_{1,2}(x)=\varphi_{4,5}(x)$ would imply the increase of the angles~$\varphi_2(x)$ and~$\varphi_5(x)$. Nevertheless, the latter figure is incorrect. Indeed, the edge lengths of the suspension can be computed explicitly using~\eqref{eq_edge1},~\eqref{eq_edge2}; the result is given in~\cite[Tables~4 and~5]{AlCo11}. Now, one can easily compute: 
$$
\begin{aligned}
\cos{\angle Np_5 p_6}=\cos{\angle S p_5 p_4}&=\frac{437 \sqrt{170}-121 \sqrt{30}}{6500}\\
\cos{\angle S p_5 p_6}=\cos{\angle Np_5 p_4}&=\frac{437 \sqrt{170}+121 \sqrt{30}}{6500}\\
\end{aligned}
$$
Hence, in the spherical quadrangle~$\mathcal{Q}_5$, the lengths of the edges~$\widetilde{N}\widetilde{p}_4$ and~$\widetilde{S}\widetilde{p}_6$  are smaller than the lengths of the edges~$\widetilde{N}\widetilde{p}_6$ and~$\widetilde{S}\widetilde{p}_4$. Therefore, Fig.~\ref{fig_p5} is incorrect. In fact, the spherical quadrangle~$\mathcal{Q}_5$ is as shown in Fig.~\ref{fig_p5_corr}. Hence, the angle $\varphi_5(x)$ decreases as $x$ increases. So the argumentation in~\cite{AlCo11} fails.

\end{document}